\documentclass{base_class}
\usepackage{bbm}

\title[]{Godbersen's conjecture and the $L_p$-Rogers--Shephard inequality} 

\author{Jan Kotrbat{\'y}}
\author{Mohamed A. Mouamine}
\thanks{Supported by Charles University grants PRIMUS/24/SCI/009 and UNCE/24/SCI/022.}
 
\address{Charles University, Faculty of Mathematics and Physics, Mathematical Institute of Charles University, Sokolovsk\'a 49/83, 186 00 Prague, Czechia}
\email{kotrbaty@karlin.mff.cuni.cz}
\email{mohamed-abdeldjalil.mouamine@matfyz.cuni.cz}
\subjclass[2020]{52A40, 52A39, 52B11}
\date{\today}

\begin{document}

\maketitle

\begin{abstract} 
We prove that the mixed volume of a convex body of fixed positive volume with its reflection about the origin is maximized by simplices. This confirms a conjecture of C.~Godbersen from 1938 and refines the classical Rogers--Shephard inequality. We also prove that simplices are the only extremizers among convex polytopes. Finally, we use this inequality to prove an $L_p$-version of the Rogers--Shephard inequality for convex bodies containing the origin and show that, for any $p\in(1,\infty]$, the only extremizers are simplices with a vertex at the origin.
\end{abstract}

\section{Introduction}

\subsection{Background}

The celebrated Rogers--Shephard inequality \cite{RogersShephard57,RogersShephard58} gives a sharp upper bound on the volume of the difference body $K-K=K+(-K)$ associated to a convex body $K\subset\RR^n$:
\begin{align}
\label{eq:RS}
\vol_n(K-K)\leq {2n\choose n}\vol_n(K).
\end{align}
Here $\vol_n$ is the Lebesgue measure on $\RR^n$, $K+L=\{x+y\mid x\in K, y\in L\}$ is the Minkowski sum of $K,L\subset\RR^n$, and $-K=\{-x:x\in K\}$ is the reflection of $K\subset\RR^n$ about the origin. If $K$ has non-empty interior, equality in \eqref{eq:RS} holds if and only if $K$ is a simplex. 

An alternative proof of the Rogers--Shephard inequality was given by Chakerian \cite{Chakerian}. Based on Chakerian's approach, Schneider \cite{Schneider70} proved the following higher-order generalization of the Rogers--Shephard inequality:
\begin{align}
\label{eq:Schneider}
\vol_{rn}(\Delta_r K-K^r)\leq {rn+n\choose n}\vol_n(K)^r,
\end{align}
where $r\in\NN$ and $\Delta_r:\RR^n\to\RR^{rn}$ is the diagonal embedding. Assuming $\interior K\neq \emptyset$, equality in \eqref{eq:Schneider} holds  again precisely for simplices. Schneider's work laid foundations for much of the recent progress in higher-order Brunn--Minkowski theory \cite{HaddadETAL23, LangharstETAL:mOrder,  LangharstSolaUlivelli24, HaddadETAL25, LangharstRoysdonZhao25,LangharstXi24, HaddadETAL25+,Langharst:Comments,ZhouETAL25, K25, FK26,KW22}. For further generalizations of the Rogers--Shephard inequality, in particular to the functional setting, see  \cite{Roysdon20,AG19,AG21,AG16,Colesanti06}.

The Rogers--Shephard inequality can be generalized in yet another way, as we now discuss. According to the classical result of Minkowski, there exists a symmetric, Minkowski multilinear function $V(K_1,\dots,K_n)$ on the space of convex bodies, called  mixed volume, with the property that $V(K,\dots,K)=\vol_n(K)$. Expanding the left-hand side of \eqref{eq:RS} by multilinearity and the right-hand side using ${2n\choose n}=\sum_{k=0}^n{n\choose k}^2$,
 it is natural to expect that the inequality holds term-wise; namely, that for all $0\leq k\leq n$ one has
 \begin{align}
 \label{eq:Godbersen0}
 V(K[k],-K[n-k])\leq  {n\choose k}\vol_n(K),
 \end{align}
 where $K[k]$ denotes $k$ copies of convex body $K\subset \RR^n$. This conjecture was first formulated by Godbersen \cite{Godbersen} and later independently by Makai Jr. \cite{Makai}, see also \cite[\S23.1.3]{BuragoZalgaller} and \cite[\S10.1]{Schneider2014}. Since it is well known that equality holds in \eqref{eq:Godbersen0} if $K$ is a simplex (see, e.g., \cite[\S4]{RogersShephard57}), it is also commonly conjectured that simplices are the only non-trivial extremizers.

Apart from the trivial cases $k\in\{0,n\}$, the conjecture is known for general convex bodies only if $k\in\{1,n-1\}$. In this case, \eqref{eq:Godbersen0} follows at once from the monotonicity of the mixed volume and the well-known fact that $-K\subset nK$ holds for any convex body $K\subset\RR^n$ with centroid at the origin, while the characterization of equality cases follows from the fact that a simplex is the unique extremizer of the so-called Minkowski measure of symmetry, see \cite[\S6.1]{Grunbaum63}. In particular, the conjecture is true in dimensions $n\leq 3$. For $n\geq4$, it has been established only for certain special classes of convex bodies. First, Godbersen \cite{Godbersen} proved \eqref{eq:Godbersen0} for bodies of constant width. Much more recently, Artstein-Avidan, Sadovsky, and Sanyal \cite{ArtsteinSadovskySanyal23} established \eqref{eq:Godbersen0} for anti-blocking convex bodies and proved that simplices are the only extremizers within this class. Sadovsky \cite{Sadovsky25} subsequently extended this result to the class of locally anti-blocking bodies.

Besides these partial results, several related results have been obtained, providing additional evidence for the validity of \eqref{eq:Godbersen0}. First, Artstein-Avidan, Einhorn, Florentin, and Ostrover \cite{ArtsteinETAL15} proved that for $1\leq k\leq n$,
\begin{align}
\label{eq:bound}
V(K[k],-K[n-k])\leq \frac{n^n}{k^k(n-k)^{n-k}}\vol_n(K)\sim{n\choose k}\sqrt{2\pi\frac{k(n-k)}n}\vol_n(K),
\end{align}
providing the currently best known upper bound on $V(K[k],-K[n-k])$. They also proposed a stronger conjecture that interpolates between \eqref{eq:Godbersen0} and another long-standing conjecture of Fáry and Rédei \cite{FaryRedei}. More recently, Artstein-Avidan and Putterman \cite{ArtsteinPutterman25} proved a refinement of \eqref{eq:bound} from which they concluded that, in particular, \eqref{eq:Godbersen0} holds ``on average'' in the sense that
\begin{align*}
\frac1{n+1}\sum_{k=0}^n\frac{V(K[k],-K[n-k])}{{n \choose k}}\leq \vol_n(K).
\end{align*}
Moreover, they conjectured that for any $\lambda\in(0,1)$, the ratio
\begin{align*}
\frac{\vol_n\big( (1-\lambda)K+\lambda(-K)\big)}{\vol_n(K)}
\end{align*}
is maximized precisely by $n$-simplices, and verified this for $n\leq5$. Observe that their conjecture would follow from the inequality \eqref{eq:Godbersen0} and the known characterization of equality cases for $k=1$; on the other hand, it subsumes the Rogers--Shephard inequality \eqref{eq:RS} for $\lambda=\frac12$.

Finally, a higher-order generalization of \eqref{eq:Godbersen0} refining Schneider's inequality \eqref{eq:Schneider} was proposed by Schneider \cite{Schneider00} and independently by the first-named author \cite{K25}. The first-named author \cite{K25} recently proved this more general conjecture for the class of anti-blocking convex bodies. Further special cases were very recently verified by Fryš and the first-named author \cite{FK26} who also generalized to the higher-order setting the results and conjectures of Artstein-Avidan--Einhorn--Florentin--Ostrover \cite{ArtsteinETAL15} and Artstein-Avidan--Putterman \cite{ArtsteinPutterman25}.

\subsection{Our results}

In this paper we settle the conjectured inequality of Godbersen--Makai Jr. in the affirmative and prove that simplices are the only extremizers among convex polytopes. Thus, our first main result is the following

\begin{theorem}
\label{thm:mainA}
For each convex body $K\subset\RR^n$ and each $0\leq k\leq n$ one has
\begin{align}
\label{eq:Godbersen}
V(K[k],-K[n-k])\leq  {n\choose k}\vol_n(K).
\end{align}
Moreover, if $0<k<n$ and $K$ is a polytope with $\interior K\neq\emptyset$, equality holds in \eqref{eq:Godbersen} if and only if $K$ is a simplex.
\end{theorem}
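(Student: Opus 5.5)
One route to the inequality is to reduce to polytopes and run a variational argument in which the simplex appears as the only critical point of the functional
\[
F_k(K)=\frac{V(K[k],-K[n-k])}{\vol_n(K)} .
\]

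\textbf{Reduction and setup.} Both sides of \eqref{eq:Godbersen} are continuous in the Hausdorff metric, so it suffices to treat polytopes $P$ with nonempty interior. Since $F_k$ is affine invariant and continuous, it is natural to consider its behaviour under \emph{vertex moves}. In such a move a single vertex $v$ of $P$ is moved along a line while the combinatorial type is kept fixed. For a polytope with $m$ vertices in general position, with $P=\conv\{v_1,\dots,v_m\}$ and the face structure fixed, both $\vol_n(P)$ and $V(P[k],-P[n-k])$ are polynomials in the coordinates of $v$. The plan is to show the following.
\begin{enumerate}
\item[(i)] Along a suitable direction, $t\mapsto \vol_n(P_t)$ is affine. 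Moving $v$ parallel to the hyperplane through its neighbours works when $v$ is a simple vertex, or else one triangulates.
\item[(ii)] Along the same direction, $t\mapsto V(P_t[k],-P_t[n-k])$ is convex.
\end{enumerate}
Then $F_k$ is convex along such a path, so its maximum is attained at an endpoint. At an endpoint $v$ reaches a position where it becomes coplanar with a facet, and the number of vertices drops.

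\textbf{Convexity along the path.} This is the main obstacle. Write $V(P[k],-P[n-k])$ as the normalized volume of the mixed fibre body of the Cayley embedding, namely the section or projection formula
\[
V(K[k],-K[n-k]) \propto \vol_{2n}\bigl(\{(x,y):x\in K,\ y\in -K\}\cap\text{slab}\bigr).
\]
Equivalently, use Schneider's representation via $\vol_n$ of $\lambda K+(1-\lambda)(-K)$, extracting the coefficient of $\lambda^k(1-\lambda)^{n-k}$. A cleaner route is the Rogers--Shephard / Chakerian approach: express
\[
\vol_n\bigl((1-\lambda)K+\lambda(-K)\bigr)=\int_{K}\vol_n\bigl(K\cap (x+ \mu K)\bigr)\,\dots
\]
through the covariogram $g_K(x)=\vol_n(K\cap(K+x))$. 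Concretely, $V(K[k],-K[n-k])$ is a linear combination of moments of the level sets of $g_K^{1/n}$. I would try to prove that under a shadow-system move (Rogers--Shephard parallel chord movement), which is exactly a vertex move of the type in (i), the covariogram-based quantity is convex in $t$. Shadow systems are the right tool: Rogers and Shephard showed that $\vol_n(K_t-K_t)$ is convex along shadow systems. I would upgrade this to each mixed term $V(K_t[k],-K_t[n-k])$. The key point is that $K_t\times(-K_t)$ is a shadow system in $\RR^{2n}$ with a joint direction, so that $(1-\lambda)K_t+\lambda(-K_t)$ is a projection of a $2n$-dimensional shadow system. Its volume is therefore convex in $t$ for every $\lambda$, simultaneously. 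Convexity of each coefficient does not follow directly from convexity of the polynomial in $\lambda$. For that I would instead show convexity of the $2n$-dimensional fibre-volume identity, where each coefficient is itself a volume of a projection of a shadow system in a product space.

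\textbf{Conclusion and equality.} Iterate the vertex reductions. Every polytope can be deformed through shadow systems, along which $\vol_n$ is constant, down to a simplex while $F_k$ does not decrease. Since $F_k(\text{simplex})=\binom nk$, this gives \eqref{eq:Godbersen}. For the equality case with $0<k<n$, I would show strict convexity along any nondegenerate move when $P$ is not a simplex. For this one exhibits, among the mixed area measures, a facet of $P$ whose contribution varies nonlinearly, using that $k$ and $n-k$ are both positive, so that both $P$ and $-P$ genuinely appear. Hence a non-simplex polytope is never a maximizer.
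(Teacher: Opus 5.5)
\paragraph{The gap: the reduction to simplices.} For $n\ge3$ this is not a technicality.

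\emph{Vertices need not disappear.} Your claim that at an endpoint of a vertex move ``the number of vertices drops'' holds only in the plane. In $\RR^2$, when $v_t$ becomes collinear with an edge $[w,w']$ of $Q=\conv(\text{other vertices})$, one of $w,w'$ lies between $v_t$ and the other, so it stops being a vertex. For $n\ge3$ the maximal interval on which $t\mapsto\vol_n(P_t)$ is affine ends when $v_t$ enters $\aff F$ for some facet $F$ of $Q$, and nothing forces a vertex to be absorbed. For example, in $\RR^3$ let $F$ be a triangle and let $v_t$ lie in $\aff F$ beyond only one of its edges. Then the two facets through $v_t$ over the other two edges merge into the quadrilateral $\conv(F\cup\{v_t\})$, and all vertices survive. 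So the induction on the number of vertices does not run, and you give no other quantity that decreases.

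\emph{Moves need not exist.} Once non-simplicial facets appear, a single-vertex move may not even start. Write $\vol_n(\conv(Q\cup\{p\}))=\vol_n(Q)+\frac1n\sum_F\vol_{n-1}(F)\,(\langle p,u_F\rangle-h_Q(u_F))_+$, where $u_F$ is the outer unit normal of the facet $F$ of $Q$. Suppose $v$ lies in non-triangular facets whose normals span $\RR^n$, as does any vertex of a cube or a dodecahedron in $\RR^3$. Then every direction $\theta\neq0$ produces a kink at $t=0$, so $\vol_n(P_t)$ is not affine on any interval around $0$ and your endpoint argument gives nothing. ``Triangulating'' does not repair this: $F_k$ is not additive over pieces, and the pieces are not moved as one convex body. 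This is the familiar reason shadow-system proofs that simplices are extremal are, in general, available only in the plane or for polytopes with few vertices.

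\paragraph{Step (ii) and the equality case.} Step (ii) is actually true, but not for your reason. $K_t\times(-K_t)$ is not a shadow system along one direction of $\RR^{2n}$, since its factors move along $(\theta,0)$ and $(0,\theta)$ with unrelated speeds. The right tool is Shephard's theorem. Write $K_t=\pi_t\tilde K$ with $\pi_t(x,s)=x+ts\theta$, $|\theta|=1$, $\tilde K\subset\RR^{n+1}$; then $-K_t=\pi_t(-\tilde K)$. Put $w_t=(-t\theta,1)$. Lemma~\ref{lem:exact}, together with the Jacobian $\sqrt{1+t^2}$ of $\pi_t$ restricted to $w_t^\perp$, gives $V(\pi_t\tilde K_1,\dots,\pi_t\tilde K_n)=(n+1)V(\tilde K_1,\dots,\tilde K_n,[0,w_t])$, which is convex in $t$ by \eqref{eq:MVint}. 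Since this is classical, all the difficulty of your approach lies in the reduction. If the reduction worked in every dimension, Godbersen's conjecture would long have been a corollary of known results.

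The same formula shows that for polytopes $t\mapsto V(P_t[k],-P_t[n-k])$ is piecewise affine, because the mixed area measure is atomic. So the ``strict convexity'' you want for the equality case is unavailable. Since a convex function with an interior maximum is constant, you would instead need, for every non-simplex polytope, an admissible move along which $F_k$ is non-constant. That runs into the same lack of admissible moves.

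\paragraph{How the paper proceeds.} The paper avoids deformations entirely. It identifies $\binom nk\vol_n(K)^2$ and $\vol_n(K)V(K[k],-K[n-k])$ with $\binom{2n}{n}V(\,\cdot\,[n],\iota_1K[k],\iota_2K[n-k])$ in $\RR^{2n}$, evaluated at $K\times K$ and at $\Delta K$ respectively. The inequality then follows from $\Delta K\subset K\times K$ and monotonicity of mixed volumes. The polytopal equality case is read off from the support of the mixed area measures, which yields disjoint faces of dimensions $k$ and $n-k$, combined with Develin's characterization of simplices.
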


Our proof of Theorem \ref{thm:mainA} is directly motivated by the recent developments related to the higher-order generalization of Godbersen's conjecture \cite{K25,FK26}. Namely, we observe that \eqref{eq:Godbersen} is in fact a consequence of the monotonicity of suitable mixed volume in dimension $2n$. This is done in Section \ref{s:inequality}, see Proposition \ref{pro:Godbersen}. Moreover, using the same argument, we give a new simple proof of the Rogers--Shephard inequality \eqref{eq:RS} in Proposition \ref{pro:RS}.

While the classification of equality cases of the monotonicity of mixed volumes is in general a difficult open problem (see \cite{Schneider85,vanHandelWang} and references therein), it is well understood in the case of convex polytopes. Using this characterization, we show  in Section \ref{s:equality} that if a polytope $K$ satisfies equality in \eqref{eq:Godbersen}, its faces necessarily obey enough incidence relations to eventually force $K$ to be a simplex.

Let us point out that our proof of \eqref{eq:Godbersen} directly generalizes to the higher-order case, which settles the inequality part of \cite[Conjecture 1.3]{K25} and provides a new simple proof of Schneider's inequality \eqref{eq:Schneider}. Furthermore, we expect that our method will establish the characterization of equality cases for polytopes also in this more general situation. This will be worked out in subsequent work.

As discussed above, Theorem \ref{thm:mainA} directly implies \cite[Theorem 1.4]{ArtsteinETAL15} and \cite[Theorem 4]{ArtsteinPutterman25}, and confirms \cite[Conjecture 2]{ArtsteinPutterman25}. Moreover, Theorem \ref{thm:mainA} substantially simplifies the proof of the quantitative stability version of the Rogers--Shephard inequality due to  B\"or\"oczky \cite{boroczky05}, since it reduces the problem to that of the stability of \eqref{eq:Godbersen} for $k=1$ and hence to that of the stability of the Minkowski measure of symmetry, see also \cite{boroczky96} and \cite{Schneider09}.

We further use Theorem \ref{thm:mainA} to resolve another long-standing open problem, see \cite[p. 532]{Schneider2014}. Namely, we show that \eqref{eq:Godbersen} implies a version of the Rogers--Shephard inequality in the Firey--Lutwak $L_p$-Brunn--Minkowski theory \cite{firey62,LutwakI,LutwakII}. This arises when the usual Minkowski sum in \eqref{eq:RS} is replaced by the $L_p$-Minkowski addition for $p>1$, see \S\ref{ss:Lp} for definition and more details.  Recall also that for $0\leq y\leq x$ we define ${x\choose y}:=\frac{\Gamma(x+1)}{\Gamma(y+1)\Gamma(x-y+1)}$. Then our second main result is

\begin{theorem}
\label{thm:mainB}
Let $p\in(1,\infty]$ and $q\in[1,\infty)$ satisfy $\frac1p + \frac1q =1$. For each convex body $K\subset\RR^n$ with $0\in K$ one has
\begin{align}
\label{eq:LpRS}
\vol_n\big(K+_p(-K)\big)\leq \sum_{k=0}^n\binom{n}{k}^2\binom{n/q}{k/q}^{-1}\vol_n(K).
\end{align}
If $\interior K\neq\emptyset$, equality holds in \eqref{eq:LpRS} if and only if $K$ is a simplex with one vertex at the origin.
\end{theorem}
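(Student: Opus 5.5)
The plan is to reduce \eqref{eq:LpRS} to Theorem \ref{thm:mainA} in three steps: represent the $L_p$-sum as a union of Minkowski combinations, bound the volume of that union by a weighted average of their volumes, and then expand those volumes in mixed volumes.

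\emph{Step 1: a union representation.} For convex bodies containing the origin and $p>1$, one has the known representation
\begin{align*}
K+_p L=\bigcup_{t\in[0,1]}\Big((1-t)^{1/q}K+t^{1/q}L\Big).
\end{align*}
It follows from $h_{K+_pL}^p=h_K^p+h_L^p$ together with H\"older's inequality: $\max_t\big((1-t)^{1/q}a+t^{1/q}b\big)=(a^p+b^p)^{1/p}$ for $a,b\ge 0$. The hypothesis $0\in K$ guarantees that $h_K\ge0$. We apply this with $L=-K$; note that $0\in-K$ as well.

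\emph{Step 2: the volume of the union (main obstacle).} The key inequality I would aim for is
\begin{align*}
\vol_n\big(K+_p(-K)\big)\le \Big(\frac nq+1\Big)\int_0^1\vol_n\Big((1-t)^{1/q}K-t^{1/q}K\Big)\,dt .
\end{align*}
The first approach I would try is a Berwald/Rogers--Shephard-type argument. I would build a convex body $W\subset\RR^{n+1}$ (or in $\RR^{2n}$, via $(x,y)\in K\times(-K)$ and the map $(x,y)\mapsto (1-t)^{1/q}x+t^{1/q}y$) whose fibres over the $t$-parameter are the sets $(1-t)^{1/q}K-t^{1/q}K$. I would then apply Berwald's inequality to the resulting concave-type function on $K+_p(-K)$, namely $z\mapsto$ the measure of admissible $t$, raised to the power $1/(n/q+1)$, to compare its integral with the volume of its support. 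The weight $n/q+1$ should come out of this as the homogeneity degree: the fibre over $t$ is a $1/q$-power rescaling of the Minkowski data, so a cone-type computation produces $n/q+1$ in place of $n+1$. I expect this step to carry the real difficulty, both in proving the inequality and in understanding its equality case.

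\emph{Step 3: expansion and Theorem \ref{thm:mainA}.} Expanding by multilinearity,
\begin{align*}
\vol_n\Big((1-t)^{1/q}K-t^{1/q}K\Big)=\sum_{k=0}^n\binom nk(1-t)^{k/q}t^{(n-k)/q}V(K[k],-K[n-k]).
\end{align*}
Applying \eqref{eq:Godbersen} termwise bounds each mixed volume by $\binom nk\vol_n(K)$. The remaining integral is a Beta integral:
\begin{align*}
\Big(\frac nq+1\Big)B\Big(\frac kq+1,\frac{n-k}q+1\Big)=\frac{\Gamma(\frac kq+1)\Gamma(\frac{n-k}q+1)}{\Gamma(\frac nq+1)}=\binom{n/q}{k/q}^{-1}.
\end{align*}
This is exactly the coefficient in \eqref{eq:LpRS}, and the inequality follows.

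\emph{Equality.} For a simplex with a vertex at the origin, one checks directly that Step 2 holds with equality: the relevant section function is exactly of the extremal Berwald form, because $K$ is a cone from the vertex $0$. Equality in Step 3 then holds by the equality case of Theorem \ref{thm:mainA}. Conversely, suppose equality holds in \eqref{eq:LpRS}. Then equality holds in Step 3 for $k=1$, which forces $K$ to be a simplex; here we use that the $k=1$ equality case of Theorem \ref{thm:mainA} holds for all convex bodies, via the Minkowski measure of symmetry, not just for polytopes. Equality in the Berwald step then forces the section function to be conical with apex at the origin. For a simplex, this means the origin must be a vertex. For $p=\infty$ we have $q=1$ and $K+_\infty(-K)=\conv(K\cup -K)$, and the same scheme applies with weight $n+1$.
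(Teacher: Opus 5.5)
Your Steps 1 and 3, the Beta-integral computation, and the reduction of the converse equality case to $k=1$ of Theorem \ref{thm:mainA} all agree with the paper. For $k=1$ that reduction correctly uses the equality case for all convex bodies, via the Minkowski measure of symmetry. The genuine gap is Step 2, which carries all of the $L_p$ content: the tool you propose cannot produce the constant $1+\frac nq$.

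Write $C_t=(1-t)^{1/q}K-t^{1/q}K$ and $I_z=\{t\in[0,1]: z\in C_t\}$, and let $g$ be the gauge of $D_pK=K+_p(-K)$. The function $\phi(z)=|I_z|$ is indeed concave, because the set $\{(z,t): z\in C_t\}$ is convex when $0\in K$. Moreover $\phi(0)=1$ and $\int_{D_pK}\phi=\int_0^1\vol_n(C_t)\,dt$. But concavity only gives $\phi(z)\ge 1-g(z)$, hence the constant $n+1$. Berwald's inequality, or the Rogers--Shephard section/projection inequality applied to that convex set in $\RR^{n+1}$, gives the same $n+1$, which is the right constant only for $p=\infty$. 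Passing to powers $\phi^a$ with $a<1$ before applying Berwald only worsens the constant to $\binom{n+1/a}{n}>n+1$. More fundamentally, for a simplex with a vertex at the origin the profile is $\phi=1-g^q$. For $q>1$ this is not affine along rays, so no Berwald-type argument, whose extremizers are cone functions, can be sharp here. For the same reason, your sufficiency claim that the section function is ``of the extremal Berwald form'' is false when $q>1$. A minor point on Step 1: H\"older only shows $D_pK=\conv\bigcup_t C_t$. That the union is itself convex follows from the convexity of $\{(z,t):z\in C_t\}$.

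The missing idea is a pointwise bound that uses $0\in K$ through rescaling rather than concavity. Suppose $\tau=g(z)>0$ and $z=\tau\big((1-t_0)^{1/q}x-t_0^{1/q}y\big)$ with $x,y\in K$. Then for every $t\in[\tau^q t_0,\,1-\tau^q(1-t_0)]$ the numbers $\alpha=\tau(1-t_0)^{1/q}(1-t)^{-1/q}$ and $\beta=\tau t_0^{1/q}t^{-1/q}$ lie in $[0,1]$. Hence $\alpha x,\beta y\in K$, so $z=(1-t)^{1/q}\alpha x-t^{1/q}\beta y\in C_t$, and therefore $|I_z|\ge 1-g(z)^q$. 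Fubini together with the layer-cake identity $\int_{D_pK}g^q=\frac{n}{n+q}\vol_n(D_pK)$ then gives exactly $\vol_n(D_pK)\le(1+\frac nq)\int_0^1\vol_n(C_t)\,dt$. This is the rigorous form of your ``$1/q$-power rescaling'' heuristic.

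The converse equality argument also runs differently from your sketch. Concavity of $\phi$ is used only to upgrade a.e.\ equality to pointwise equality $|I_z|=1-g(z)^q$. Once $K$ is known to be a simplex, suppose $0$ were not a vertex. Then there is $u\ne 0$ with $\pm u\in K$. With $s=(1-t)^{1/q}+t^{1/q}\ge 1$ one has $u=(1-t)^{1/q}s^{-1}u+t^{1/q}s^{-1}u\in C_t$ for every $t$. So $|I_u|=1>1-g(u)^q$, a contradiction. The sufficiency direction, that simplices with a vertex at the origin attain equality, is taken from the literature in the paper.
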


Theorem \ref{thm:mainB}, which we prove in Section 5, was previously known only in the following special cases. First, for $n=2$, \eqref{eq:LpRS} was proven by Bianchini and Colesanti \cite{BianchiniColesanti}. Second, Manui, Ndiaye, and Zvavitch \cite{ManuiNdiayeZvavitch24} verified \eqref{eq:LpRS} for locally anti-blocking convex bodies. Third, very recently, Fradelizi, Manui, Meyer, and Ndiaye \cite{Fradelizi:Equality} proved that in both these previously established cases, equality in \eqref{eq:LpRS} holds precisely for simplices with a vertex at the origin.

Finally, as pointed out to us by A.~Manui, our proof of Theorem \ref{thm:mainB} also  immediately yields a version of the $L_p$-Rogers--Shephard inequality for symmetric convex bodies, see Proposition \ref{pro:LpRSsym}. This confirms \cite[Conjecture 4]{Fradelizi:LpRS} of Fradelizi, Manui, Meyer, and Ndiaye who previously verified it for  zonoids.

\subsection*{Acknowledgments} We are grateful to M.~Szusterman for numerous helpful comments and suggestions that significantly improved the presentation of the paper. We also thank A.~Manui for highlighting the connection with the $L_p$-Rogers--Shephard inequality for symmetric bodies, E.~Putterman for stimulating discussions and for pointing out reference \cite{develin}, and K.~B\"or\"oczky, F.~Fryš, F.~Mussnig, and T.~Wannerer for their encouragement and useful comments. 

\section{Preliminaries}

Let us start by collecting several general facts from convex geometry that we use in the paper. Our references are the standard monographs \cite{Schneider2014,Grunbaum2003,Ziegler1995}. Throughout the paper, we assume that $n\geq 2$. 

\subsection{Convex bodies and convex polytopes}

Convex body is a non-empty compact convex set $K\subset\RR^n$. We will denote the set of all convex bodies by $\calK(\RR^n)$. The dimension of a convex body $K\in\calK(\RR^n)$ is, by definition, the dimension of its affine hull; we write $\dim K:=\dim\aff K$. Clearly $\dim K=n$ if and only if $\interior K\neq\emptyset$.

The support function $h_K(u):=\sup_{x\in K}\ip xu$, $u\in\RR^n$, of a convex body $K\in\calK(\RR^n)$ is sublinear, that is, for any $u,v\in\RR^n$ and $\lambda\geq0$ one has
$$
h_K(\lambda u)=\lambda h_K(u)\quad\text{and}\quad h_K(u+v)\leq h_K(u)+ h_K(v),
$$
and satisfies $h_K \leq h_L$ if $K \subseteq L$. The gauge function $g_K(x) := \inf\{\lambda \geq 0 \mid x \in \lambda K\}$, $x \in \RR^n$, of a convex body $K\in\calK(\RR^n)$ containing the origin is convex. For any convex body $K\in\calK(\RR^n)$ and $u\in\RR^n$, the set
$$
F_K(u):= \{x \in K\mid \ip{x}{u} = h_K(u)\}
$$
is called the face of $K$ with normal direction $u$. For a face $F\subset K$, we define its normal cone
$$
N_K(F) := \{u \in \RR^n \mid F \subseteq F_K(u)\}.
$$

A (convex) polytope is a convex body that arises as the convex hull of finitely many points in $\RR^n$ or, equivalently, as a bounded intersection of finitely many closed half-spaces. Observe that a face of a polytope is itself a polytope.

A simplex is the convex hull of affine independent points in $\RR^n$. We will need the following characterization of simplices due to Develin:
\begin{lemma}[Develin {\cite[Theorem 1]{develin}}]
\label{lem:develin}
Let $1\leq k\leq n-1$ and let $K\in\calK(\RR^n)$ be a polytope with $\interior K\neq\emptyset$. Then $K$ is a simplex if and only if each $k$-dimensional face of $K$ and each $(n-k)$-dimensional face of $K$ intersect.
\end{lemma}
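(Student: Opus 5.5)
The forward direction is a vertex count. If $K$ is a simplex with vertex set $V$, $|V|=n+1$, then every face of $K$ is the convex hull of a subset of $V$. A $k$-face uses $k+1$ vertices and an $(n-k)$-face uses $n-k+1$ vertices. Since $(k+1)+(n-k+1)=n+2>n+1$, the two vertex sets share an element, so the faces intersect. All the work is in the converse, which I would prove in contrapositive form: if $K$ is an $n$-polytope that is not a simplex, then for each $1\le k\le n-1$ it has a $k$-face and an $(n-k)$-face that are disjoint. I would argue by induction on $n$.

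\textbf{Base case and pyramids.} For $n=2$ we have $k=1$. A polygon with at least four vertices has an edge $E$ and a vertex $v\notin E$. Any edge through $v$ other than the neighbours of $E$, or else the edge $E$ together with a vertex nonadjacent to it, gives a disjoint edge--edge pair; with four or more vertices this is a direct check.

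For the inductive step, first suppose $K$ is a pyramid $\conv(\{v\}\cup B)$ with apex $v$ over an $(n-1)$-dimensional base $B$. Since $K$ is not a simplex, $B$ is not a simplex. The faces of $K$ are the faces of $B$ and the joins $\conv(\{v\}\cup G)$ with $G$ a face of $B$. Moreover $\conv(\{v\}\cup G)\cap B=G$.
\begin{itemize}
\item If $1\le k\le n-2$, apply the induction hypothesis to $B$ with the pair $(k,n-1-k)$. This gives disjoint faces $F,G$ of $B$ with $\dim F=k$ and $\dim G=n-1-k$. Then $F$ and $\conv(\{v\}\cup G)$ are disjoint faces of $K$ of dimensions $k$ and $n-k$.
\item If $k=n-1$, use the pair $(k-1,n-k)$ on $B$ instead; this is allowed when $n\ge3$. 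The join of $v$ with the $(k-1)$-face is a $k$-face of $K$, and it is disjoint from the $(n-k)$-face of $B$.
\end{itemize}
The case $k=0$ or $k=n$ never arises in the hypothesis, so the induction closes for pyramids.

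\textbf{Non-pyramids: the main obstacle.} The hard case is when $K$ is not a pyramid over any facet, so every facet misses at least two vertices. Here I would use a generic linear functional $u$ and the dual polytope. By polarity, $k$-faces of $K$ correspond to $(n-1-k)$-faces of $K^\circ$, and intersecting faces correspond to faces of $K^\circ$ lying in a common face. So the roles of the two kinds of faces can be swapped, and it suffices to treat one of $k\le n/2$ or $k\ge n/2$.

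First attempt: take $F$ to be the face of $K$ maximizing $\ip{x}{u}$ among $k$-faces reachable by a monotone path, and $G$ to be an $(n-k)$-face near the minimum. I would use the non-pyramid assumption to show that a facet containing the top vertex still leaves two vertices $w_1,w_2$ below. Then build $G$ inside a facet avoiding the top vertex's $k$-dimensional star.

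Fallback: find a facet $F_0$ and a vertex $w\notin F_0$ such that the vertex figure of $K$ at $w$ is not a simplex. Then apply the induction hypothesis to that vertex figure to produce a $(k-1)$-face, and hence a $k$-face at $w$, disjoint from an $(n-k)$-face inside $F_0$.

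Making this disjointness rigorous in the non-pyramidal case is the step I expect to be the real difficulty.
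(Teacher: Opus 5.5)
The paper gives no proof of this lemma; it quotes it from Develin, so your proposal has to stand on its own. Your forward direction, the planar base case and the pyramid step are correct. The converse, however, is not proved. The non-pyramidal case covers most polytopes (cubes, prisms, cross-polytopes, bipyramids), and it is only sketched; neither sketch works.

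\begin{itemize}
\item Your ``first attempt'' has no mechanism that forces a ``high'' $k$-face and a ``low'' $(n-k)$-face to be disjoint.
\item Your fallback presupposes a vertex $w$ whose vertex figure $K/w$ is not a simplex. This fails for every simple polytope, e.g.\ the $3$-cube or the triangular prism. These are not pyramids either, so none of your steps covers them.
\item Even when such a $w$ exists, the induction hypothesis for $K/w$ only produces faces of $K$ that \emph{contain} $w$. It says nothing about whether a $k$-face through $w$ meets an $(n-k)$-face lying in $F_0$.
\item Your polarity remark is also off. $F\cap G=\emptyset$ dualizes to: the dual faces, of dimensions $n-1-k$ and $k-1$, lie in no common facet of $K^\circ$. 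That is a different condition. The reduction to $k\le n/2$ holds only because the pair $(F,G)$ is unordered.
\end{itemize}

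The missing idea is to split the inductive step by ``non-simplicial / non-simple'' rather than ``pyramid / non-pyramid''.

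(a) Suppose some facet $F_0$ is not a simplex, and take disjoint faces $A,B$ of $F_0$ with $\dim A=k$, $\dim B=n-1-k$, $1\le k\le n-2$. Your pyramid argument goes through once the join with the apex is replaced as follows. The face figure $K/B$ (whose face lattice is the interval of faces of $K$ containing $B$) has dimension $k\ge 1$. So it has a vertex outside its facet corresponding to $F_0$. That vertex is a face $B'\supset B$ of $K$ with $\dim B'=n-k$ and $B'\not\subseteq F_0$. Then $B'\cap F_0$ is a proper face of $B'$ containing $B$, so $B'\cap F_0=B$ and $A\cap B'=\emptyset$.

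(b) Suppose some vertex figure $K/w$ is not a simplex. Induction yields faces $F,G\ni w$ of $K$ with $\dim F=k$, $\dim G=n-k+1$ ($2\le k\le n-1$) and $F\cap G=\{w\}$. Any facet of $G$ avoiding $w$ is then an $(n-k)$-face disjoint from $F$.

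The remaining values, $k=n-1$ in (a) and $k=1$ in (b), follow from the symmetry $k\leftrightarrow n-k$.

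(c) Otherwise $K$ is simple and simplicial, and for $n\ge 3$ this forces a simplex. Two edges at a vertex span a $2$-face, which must be a triangle. Hence a vertex and its $n$ neighbours form a clique closed under adjacency, so they form the whole (connected) graph of $K$.
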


\subsection{Mixed volume and mixed surface area measure}

Let $\vol_n$ be the canonical Lebesgue measure on $\RR^n$.
For convex bodies $K_1,\dots,K_m \in \calK(\RR^n)$ and $\lambda_1,\dots,\lambda_m\geq0$, we have 
$$
\vol_n(\lambda_1 K_1 + \cdots + \lambda_m K_m) = \sum_{i_1,\dots,i_n = 1}^m \lambda_{i_1}\cdots\lambda_{i_n} V(K_{i_1},\dots,K_{i_n}).
$$ 
The coefficients $V(K_{i_1},\dots,K_{i_n})$ of this polynomial are called mixed volumes. They satisfies for any $L,K_1,\dots,K_n\in\calK(\RR^n)$, permutation $\rho$ of $\{1,\dots,n\}$,  and $\lambda\geq0$ the following properties:
\begin{align*}
V(L,\dots,L)&=\vol_n(L),\\
V\big(K_{\rho(1)},\dots,K_{\rho(n)}\big)&=V(K_1,\dots,K_n),\\
V(K_1+\lambda L,K_2,\dots,K_n)&=V(K_1,\dots,K_n)+\lambda V(L,K_2,\dots,K_n).
\end{align*}

The following statement which is essentially an application of Fubini's theorem will be crucial for our proof of \eqref{eq:Godbersen}.

\begin{lemma}[{\cite[Theorem 5.3.1]{Schneider2014}}]
\label{lem:exact}
Assume $1\leq k\leq N-1$ and let $E\subset\RR^N$ be a $k$-dimensional subspace. For any $L_1,\dots,L_k,M_1,\dots,M_{N-k}\in\calK(\RR^N)$ with $L_i\subset E$, $i=1,\dots,k$, one has
\begin{align*}
{N\choose k}V(L_1,\dots,L_k,M_1,\dots,M_{N-k})=V_E(L_1,\dots,L_k)V_{E^\perp}(M_1|E^\perp,\dots,M_{N-k}|E^\perp).
\end{align*}
Here $M|E^\perp$ denotes the orthogonal projection of $M\subset\RR^N$ onto $E^\perp$, and the mixed volumes $V_E$ and $V_{E^\perp}$ are normalized by the canonical Lebesgue measures on the Euclidean spaces $E\subset\RR^N$ and $E^\perp\subset\RR^N$, respectively.
\end{lemma}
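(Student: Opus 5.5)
The identity in Lemma~\ref{lem:exact} can be proved by first reducing to the case of equal bodies, $L_1=\dots=L_k=L$ and $M_1=\dots=M_{N-k}=M$, and then reading off the leading asymptotics of $\vol_N(\lambda L+M)$ as $\lambda\to\infty$ via Fubini's theorem.

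\emph{Reduction.} Both sides are symmetric and Minkowski multilinear, with nonnegative coefficients, in the $L_i$ (ranging over convex bodies in $E$) and in the $M_j$. For the right-hand side this holds because orthogonal projection onto $E^\perp$ is Minkowski linear, $(M+\lambda M')|E^\perp=M|E^\perp+\lambda M'|E^\perp$. Consequently, the standard polarization argument recovers each side from its diagonal values: expand $\vol$ of $\sum_i\lambda_iL_i$ and of $\sum_j\mu_jM_j$ as polynomials and compare the coefficients of $\lambda_1\cdots\lambda_k\mu_1\cdots\mu_{N-k}$. It therefore suffices to prove
\begin{align*}
{N\choose k}V(L[k],M[N-k])=\vol_k(L)\,\vol_{N-k}(M|E^\perp)
\end{align*}
for a convex body $L\subset E$ and an arbitrary $M\in\calK(\RR^N)$.

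\emph{Polynomial in $\lambda$.} Expanding gives $\vol_N(\lambda L+M)=\sum_{j=0}^N{N\choose j}\lambda^jV(L[j],M[N-j])$. For $j>k$ the term vanishes, since a mixed volume in which more than $k$ entries lie in a $k$-dimensional subspace is zero. One can see this directly: $\vol_N(\lambda L+\mu B)$ grows at most like $\lambda^k$ for fixed $\mu$, because $\lambda L+\mu B$ is contained in a box of side $O(\lambda)$ in $E$ and side $O(1)$ in $E^\perp$. Hence
\begin{align*}
\lim_{\lambda\to\infty}\lambda^{-k}\vol_N(\lambda L+M)={N\choose k}V(L[k],M[N-k]).
\end{align*}

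\emph{Fubini.} We write $\vol_N(\lambda L+M)=\int_{E^\perp}\vol_k\big((\lambda L+M)\cap(E+y)\big)\,dy$. The fiber is nonempty exactly when $y\in M|E^\perp$, because $L\subset E$. For such $y$ we pick $m_y\in M$ with $m_y|E^\perp=y$. The fiber then contains $\lambda L+m_y$, so its volume is at least $\lambda^k\vol_k(L)$. It is also contained in $\lambda L+m_y+D(B^N\cap E)$, where $D=\operatorname{diam}M$, so its volume is at most $\vol_k(\lambda L+DB_E)=\lambda^k\vol_k(L)+O(\lambda^{k-1})$, uniformly in $y$. Integrating over the bounded set $M|E^\perp$, dividing by $\lambda^k$, and letting $\lambda\to\infty$ gives the limit $\vol_k(L)\vol_{N-k}(M|E^\perp)$. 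This proves the diagonal identity.

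The main point needing care is the vanishing of the coefficients for $j>k$ and the uniform error bound when $\dim L<k$. In that degenerate case both sides of the identity are $0$, and the upper bound above already yields limit $0$, so no separate argument is needed.
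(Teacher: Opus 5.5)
Your proof is correct and is essentially the Fubini argument behind the cited \cite[Theorem 5.3.1]{Schneider2014}: polarization reduces to $L_1=\dots=L_k=L$, $M_1=\dots=M_{N-k}=M$, and one then slices $\lambda L+M$ by the translates $E+y$, $y\in M|E^\perp$. The only cosmetic difference is that you sandwich the fibers. You could instead use the exact identity $(\lambda L+M)\cap(E+y)=\lambda L+\big(M\cap(E+y)\big)$, which shows at once that $\vol_N(\lambda L+M)$ is a polynomial of degree at most $k$ in $\lambda$ with leading coefficient $\vol_k(L)\vol_{N-k}(M|E^\perp)$; this makes both the limit and your separate vanishing argument for $j>k$ unnecessary (that argument, as written, treats $\mu B$ rather than $M$).
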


For any $K_1,\dots,K_{n-1}\in\calK(\RR^n)$ there further exists a unique non-negative and finite Borel measure $S(K_1,\dots,K_{n-1};\bullet)$ on $S^{n-1}$, called the mixed surface area measure, such that for any $L\in\calK(\RR^n)$ it holds
\begin{align}
\label{eq:MVint}
V(L,K_1,\dots,K_{n-1}) = \frac{1}{n} \int_{S^{n-1}} h_L(u) \d S(K_1,\dots,K_{n-1}; u).
\end{align}
This integral representations immediately yields the monotonicity of the mixed volume: If $K\subset L$, one has $V(K,K_1,\dots,K_{n-1})\leq V(L,K_1,\dots,K_{n-1})$. In particular, $V\geq0$.

\begin{lemma}[{\cite[Theorem 5.1.8]{Schneider2014}}]
\label{lem:Vpositive}
For $K_1,\dots,K_n\in\calK(\RR^n)$, one has $V(K_1,\dots,K_n)>0$  if and only if $\dim(K_I)\geq |I|$ for any $I\subset\{1,\dots,n\}$, where $K_I:=\sum_{i\in I}K_i$.
\end{lemma}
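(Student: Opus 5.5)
We would prove the two implications separately, using translation invariance and monotonicity of the mixed volume. Both follow from the integral representation \eqref{eq:MVint} together with symmetry of $V$. Throughout, we write $L_i:=\operatorname{lin}(K_i-K_i)$ for the linear subspace parallel to $\aff K_i$. Then $\dim K_I=\dim\sum_{i\in I}L_i$.

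\emph{Necessity.} Suppose $\dim K_I<|I|$ for some $I$, and put $m:=|I|$. After translating each $K_i$, which does not change $V$, we may assume $K_i\subset E:=\sum_{i\in I}L_i$ for $i\in I$, where $\dim E<m$. By monotonicity in each argument, we may replace $K_i$ for $i\in I$ by a large ball $B_E$ of the subspace $E$, and the remaining $K_j$ by a large ball $B$ of $\RR^n$. This only increases $V$. Now $\vol_n(\lambda B_E+\mu B)$, viewed as a polynomial in $\lambda$, has degree at most $\dim E$. This follows from Fubini: slicing orthogonally to $E$, the $E$-section of $\lambda B_E+\mu B$ is a sum of $\lambda B_E$ and a convex set, so its $\dim E$-volume has degree $\le\dim E$ in $\lambda$. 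Hence the coefficient $V(B_E[m],B[n-m])$ of $\lambda^m$ vanishes, and therefore $V(K_1,\dots,K_n)=0$. Alternatively, one can apply Lemma~\ref{lem:exact} after enlarging $E$, or more simply observe degeneracy directly.

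\emph{Sufficiency.} Assume $\dim\sum_{i\in I}L_i\ge|I|$ for all $I$. The key combinatorial step is Rado's theorem on independent transversals: under this Hall-type condition there exist vectors $v_i\in L_i$, $i=1,\dots,n$, that are linearly independent. We would prove it by induction on $n$ in the standard Hall manner. If every proper nonempty $I$ satisfies strict inequality $\dim\sum_IL_i>|I|$, pick any nonzero $v_n\in L_n$ and pass to the quotient $\RR^n/\RR v_n$ with the images of $L_1,\dots,L_{n-1}$; the condition drops by at most one and so persists. Otherwise, take a critical $I$ with equality, and apply induction inside $E=\sum_IL_i$ and in the quotient $\RR^n/E$ for the complement; the complement's condition follows from submodularity of $\dim$. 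Given the $v_i$, each $K_i$ contains a translate of a segment $S_i=[0,\varepsilon v_i]$. Monotonicity and translation invariance then give $V(K_1,\dots,K_n)\ge V(S_1,\dots,S_n)=\frac{\varepsilon^n}{n!}|\det(v_1,\dots,v_n)|>0$. The last identity is read off from $\vol_n(\sum\lambda_iS_i)=\prod\lambda_i\,\varepsilon^n|\det(v_i)|$, the volume of a parallelepiped, by comparing coefficients of $\lambda_1\cdots\lambda_n$.

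The main obstacle is the transversal (Rado) step, in particular checking that the Hall condition survives passage to quotients in the critical case. The remaining steps are routine consequences of monotonicity and multilinearity.
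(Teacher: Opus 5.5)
Your argument is correct and is essentially the standard proof of \cite[Theorem 5.1.8]{Schneider2014}, which the paper cites without proof. For necessity you translate the $K_i$, $i\in I$, into a subspace of dimension less than $|I|$ and use that the corresponding mixed volumes vanish; for sufficiency you run a Hall--Rado induction (strict versus critical case) to obtain segments $S_i\subset K_i$ with linearly independent directions, then conclude by monotonicity and $V(S_1,\dots,S_n)=\frac{\varepsilon^n}{n!}\,|\det(v_1,\dots,v_n)|$. One small correction: in the critical case you need no submodularity. For any $J'$ disjoint from $I$, the hypothesis in the quotient by $E=\sum_{i\in I}L_i$ follows directly from $E+\sum_{j\in J'}L_j=\sum_{i\in I\cup J'}L_i$ and $\dim E=|I|$.
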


In particular, for any $K,K_1,\dots,K_{n-k}\in\calK(\RR^n)$, one has $V(K[k],K_1,\dots,K_{n-k})=0$ whenever $k>\dim K$. Here $K[k]$ denotes $k$ copies of $K$.

If $K_1,\dots,K_{n-1}\in\calK(\RR^n)$ are polytopes, then the mixed surface are measure is atomic; more precisely, 
\begin{align*}
S(K_1,\dots,K_{n-1};\bullet)=\sum_u V_{n-1}\big(F_{K_1}(u),\dots,F_{K_{n-1}}(u)\big)\delta_u,
\end{align*}
where the sum is over all outer unit normals to the facets of $K_1+ \cdots + K_{n-1}$, the mixed volume $V_{n-1}$ is taken on $u^\perp+u\cong\RR^{n-1}$, and $\delta_u$ is the Dirac measure supported in $u$, see \cite[(5.22)]{Schneider2014}. This together with Lemma \ref{lem:Vpositive} yields at once an explicit description of the support of $S$:
\begin{align}
\label{eq:supp}
	\supp S(K_1,\dots,K_{n-1}; \bullet) = \{ u \in S^{n-1} \mid \dim \sum_{i\in I} F_{K_i}(u) \geq |I| \text{ for every } I \subseteq [n-1]\},
\end{align} 
where we also used that $F_{K_i+K_j}(u)=F_{K_i}(u)+F_{K_j}(u)$, see \cite[Theorem 1.7.5]{Schneider2014}. Formula \eqref{eq:supp} will be used extensively in our proof of the characterization of equality cases of \eqref{eq:Godbersen} for polytopes.

\subsection{$L_p$-Minkowski sum}
\label{ss:Lp}

Fix $p \in[1,\infty]$ and consider convex bodies $K,L\in\calK(\RR^n)$ containing the origin. The $L_p$-Minkowski sum of $K$ and $L$ is the convex body $K+_p L$ defined by 
\begin{align*}
h_{K+_p L}(x) = \left(h_K^p(x) + h_L^p(x)\right)^{\frac1p}.
\end{align*}
Note that $K+_1 L=K+L$ and $K+_\infty L=\conv (K\cup L)$. If $p>1$, one has
\begin{align}
\label{eq:+Lp}
K +_p L = \left\{(1-t)^{\frac1q} x + t^{\frac1q} y\mid x \in K, y \in L, t \in [0,1]\right\},
\end{align}
where $q\in[1,\infty)$ is the H\"older conjugate of $p$, i.e.,
$$
\frac1p + \frac1q =1,
$$
see \cite[Lemma 2]{LYZ12}.

\section{Proof of Godbersen's inequality}
\label{s:inequality}

In this section we will prove the first part of Theorem \ref{thm:mainA}; namely, we will establish the inequality \eqref{eq:Godbersen} for general convex bodies.

Consider the diagonal embedding $\Delta:\RR^n\to \RR^{2n}$ given by $\Delta(x)=(x,x)$. In what follows we will use Lemma \ref{lem:exact} for $E:=\Delta(\RR^n)\subset\RR^{2n}$. To this end, observe that for $K_1,\dots,K_n\in\calK(\RR^n)$ one has
\begin{align}
\label{eq:volDelta}
V_E(\Delta K_1,\dots,\Delta K_n)= \sqrt2^nV(K_1,\dots,K_n).
\end{align}
The constant $\sqrt2^n$ can be determined, e.g., by taking $K_1,\dots,K_n$ to be pairwise orthogonal unit segments. Similarly, for $\wt\Delta:\RR^n\to \RR^{2n}$, $\wt\Delta(x)=(x,-x)$, one has $E^\perp=\wt\Delta(\RR^n)$  and
\begin{align}
\label{eq:volDeltaT}
V_{E^\perp}(\wt\Delta K_1,\dots,\wt\Delta K_n)= \sqrt2^nV(K_1,\dots,K_n).
\end{align}
We will also consider the map $\delta:\RR^{2n}\to\RR^n $ given by $\delta(x,y)= x-y$. Observe that then
$$
\frac12 \wt\Delta\circ\delta:\RR^{2n}\to E^\perp
$$
is  the orthogonal projection. In particular, using \eqref{eq:volDeltaT}, for any $M_1,\dots,M_n\in\calK(\RR^{2n})$ we have
\begin{align}
\label{eq:volproj}
V_{E^\perp}(M_1|E^\perp,\dots,M_n|E^\perp)= \frac1{\sqrt2^n}V(\delta M_1,\dots,\delta M_n).
\end{align}

We first give a simple proof of the Rogers--Shephard inequality.

\begin{proposition}
\label{pro:RS}
For each $K\in\calK(\RR^n)$ one has
$$
\vol_n(K-K)\leq{2n\choose n}\vol_n(K).
$$
\end{proposition}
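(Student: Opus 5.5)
The plan is to realize $\vol_n(K)\vol_n(K-K)$ as a single mixed volume in $\RR^{2n}$ via Lemma \ref{lem:exact}, and then bound that mixed volume by monotonicity. Throughout, $E=\Delta(\RR^n)$ and $N=2n$.

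First I dispose of the degenerate case. If $\interior K=\emptyset$, then $K$ lies in an affine hyperplane. Hence $K-K$ lies in a linear hyperplane, so both sides vanish or the left side is $0$, and there is nothing to prove. So I assume $\vol_n(K)>0$.

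Next I apply Lemma \ref{lem:exact} with $k=n$, $L_1=\dots=L_n=\Delta K\subset E$, and $M_1=\dots=M_n=K\times K\in\calK(\RR^{2n})$. This gives
\begin{align*}
{2n\choose n}V(\Delta K[n],(K\times K)[n])=V_E(\Delta K[n])\,V_{E^\perp}\big((K\times K)|E^\perp[n]\big).
\end{align*}
By \eqref{eq:volDelta}, the first factor equals $\sqrt2^n\vol_n(K)$. By \eqref{eq:volproj}, the second factor equals $\sqrt2^{-n}\vol_n(\delta(K\times K))$. Since $\delta(K\times K)=\{x-y\mid x,y\in K\}=K-K$, the powers of $\sqrt2$ cancel and we obtain the identity
\begin{align*}
\vol_n(K)\vol_n(K-K)={2n\choose n}V(\Delta K[n],(K\times K)[n]).
\end{align*}

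The key observation is the inclusion $\Delta K=\{(x,x)\mid x\in K\}\subset K\times K$. By monotonicity of the mixed volume (a consequence of \eqref{eq:MVint}) together with symmetry of $V$, I can replace the copies of $\Delta K$ by $K\times K$ one at a time. Each replacement does not decrease the value, so
\begin{align*}
V(\Delta K[n],(K\times K)[n])\le V((K\times K)[2n])=\vol_{2n}(K\times K)=\vol_n(K)^2.
\end{align*}
Combining this with the identity above and dividing by $\vol_n(K)>0$ yields the claimed inequality.

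The only step needing care is the normalization bookkeeping. Specifically, one must check that \eqref{eq:volDelta} and \eqref{eq:volproj} apply with the same constants and that $\delta(K\times K)=K-K$; both follow directly from the definitions. There is no genuine analytic obstacle here: the whole content lies in choosing $M=K\times K$, so that the inclusion $\Delta K\subset K\times K$ makes monotonicity available.
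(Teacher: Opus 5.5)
Your proof is correct and is essentially the paper's argument. The paper also applies Lemma~\ref{lem:exact} with $E=\Delta(\RR^n)$, $L_i=\Delta K$ and $M_i=K^2$, and uses \eqref{eq:volDelta} and \eqref{eq:volproj} to get ${2n\choose n}V(\Delta K[n],K^2[n])=\vol_n(K)\vol_n(K-K)$. It then bounds this by $\vol_n(K)^2$ via monotonicity and the inclusion $\Delta K\subset K^2$, with the degenerate case and the one-at-a-time replacement left implicit.
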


\begin{proof}
Clearly, it is enough to consider the case $\vol_n(K)>0$. Using the monotonicity of the mixed volume applied to $\Delta K\subset K^2$, Lemma \ref{lem:exact}, \eqref{eq:volDelta}, and \eqref{eq:volproj}, we have
\begin{align*}
{2n\choose n}\vol_n(K)^2&={2n\choose n}\vol_{2n}(K^2)\\
&\geq {2n\choose n}V(\Delta K[n],K^2[n])\\
&=\vol_n(K)\vol_n(\delta K^2)\\
&=\vol_n(K)\vol_n(K-K)
\end{align*}
and the claim follows.
\end{proof}

A slight modification of the same argument yields the inequality part of Theorem \ref{thm:mainA}. For the proof we will consider the maps $\iota_1,\iota_2:\RR^n\to\RR^{2n}$ given by $\iota_1(x)=(x,0)$ and $\iota_2(x)=(0,x)$, respectively.

\begin{proposition}
\label{pro:Godbersen}
For each $K\in\calK(\RR^n)$ each $0\leq k\leq n$ one has
$$
V(K[k],-K[n-k])\leq{n\choose k}\vol_n(K).
$$
\end{proposition}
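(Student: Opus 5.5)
The plan is to adapt the proof of Proposition \ref{pro:RS}: compute one mixed volume in $\RR^{2n}$ in two ways, and compare the two computations using monotonicity applied to $\Delta K\subset K^2$. The trivial cases $k\in\{0,n\}$ hold with equality, since $\vol_n(-K)=\vol_n(K)$. If $\vol_n(K)=0$, then $K$ and $-K$ both lie in parallel hyperplanes, so every Minkowski sum of copies of $K$ and $-K$ has dimension $<n$. Lemma \ref{lem:Vpositive} then gives $V(K[k],-K[n-k])=0$, and the inequality is trivial. So we may assume $\vol_n(K)>0$.

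The key quantity is
$$
W:=V\big(\Delta K[n],\iota_1K[k],\iota_2K[n-k]\big)
$$
in $\RR^{2n}$. First, we evaluate $W$ exactly using Lemma \ref{lem:exact} with $E=\Delta(\RR^n)$. By \eqref{eq:volDelta} and \eqref{eq:volproj}, using $\delta\iota_1K=K$ and $\delta\iota_2K=-K$, we get
$$
{2n\choose n}W=\sqrt2^n\vol_n(K)\cdot\frac{1}{\sqrt2^n}V(K[k],-K[n-k])=\vol_n(K)\,V(K[k],-K[n-k]).
$$

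Second, we bound $W$ from above. Since $\Delta K\subset K^2=\iota_1K+\iota_2K$, monotonicity of the mixed volume (applied $n$ times, once in each slot) gives
$$
W\leq V\big((\iota_1K+\iota_2K)[n],\iota_1K[k],\iota_2K[n-k]\big).
$$
We expand the right-hand side by multilinearity into terms of the form
$$
{n\choose a}V\big(\iota_1K[a+k],\iota_2K[n-a+n-k]\big).
$$
Here $\iota_1K$ is at most $n$-dimensional, so Lemma \ref{lem:Vpositive} kills every term with $a+k>n$. By symmetry, $\iota_2K$ is at most $n$-dimensional, which kills every term with $2n-a-k>n$. Hence only the term $a=n-k$ survives, giving
$$
{n\choose n-k}V\big(\iota_1K[n],\iota_2K[n]\big).
$$
By Lemma \ref{lem:exact} with $E=\iota_1(\RR^n)$, we have
$$
V\big(\iota_1K[n],\iota_2K[n]\big)={2n\choose n}^{-1}\vol_n(K)^2.
$$
This yields
$$
{2n\choose n}W\leq{n\choose k}\vol_n(K)^2.
$$

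Combining the exact value of $W$ with this upper bound and dividing by $\vol_n(K)>0$ gives the claim. The only step requiring care is the multilinear expansion, specifically verifying that all cross terms vanish. This reduces to the dimension count via Lemma \ref{lem:Vpositive}, applied to the index sets consisting of all copies of $\iota_1K$ or all copies of $\iota_2K$.
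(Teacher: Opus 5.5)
Your proof is correct and is essentially the paper's own argument. The paper evaluates the same auxiliary quantity $V(\Delta K[n],\iota_1K[k],\iota_2K[n-k])$ via Lemma \ref{lem:exact} with $E=\Delta(\RR^n)$, and bounds it by monotonicity through $\Delta K\subset K^2=\iota_1K+\iota_2K$ together with Lemma \ref{lem:exact}. You additionally spell out the multilinear expansion (the cross terms vanish by Lemma \ref{lem:Vpositive}) and the degenerate case $\vol_n(K)=0$, both of which the paper leaves implicit.
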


\begin{proof}
We may assume  $\vol_n(K)>0$. On the one hand, using  $K^2=\iota_1K+\iota_2K$ together with Lemma \ref{lem:exact},  we have
$$
{2n\choose n}V(K^2[n],\iota_1 K[k],\iota_2K[n-k])={2n\choose n}{n\choose k}V(\iota_1 K[n],\iota_2K[n])={n\choose k}\vol_n(K)^2.
$$
On the other hand, using the monotonicity of the mixed volume, Lemma \ref{lem:exact}, \eqref{eq:volDelta}, and \eqref{eq:volproj}, we have
\begin{align*}
{2n\choose n}V(K^2[n],\iota_1 K[k],\iota_2K[n-k])&\geq {2n\choose n}V(\Delta K[n],\iota_1 K[k],\iota_2K[n-k])\\
&=\vol_n(K)V(\delta\iota_1K[k],\delta\iota_2 K[n-k])\\
&=\vol_n(K)V(K[k],-K[n-k]).
\end{align*}
Comparing the former and the latter, the claim follows.
\end{proof}

\section{Equality cases of Godbersen's inequality for convex polytopes}
\label{s:equality}

Now we will characterize the equality cases of Godbersen's inequality within the class of convex polytopes, thus proving the second part of Theorem \ref{thm:mainA}.

Recall that the only inequality used in the proof of Proposition \ref{pro:Godbersen} was the monotonicity of the mixed volume. Hence, $K\in\calK(\RR^n)$ with non-empty interior satisfies equality in \eqref{eq:Godbersen} if and only if
\begin{align}
\label{eq:equalities}
\begin{split}
V(\Delta K[n],\iota_1 K[k],\iota_2K[n-k])&=V(K^2[1],\Delta K[n-1],\iota_1 K[k],\iota_2K[n-k])\\
&=V(K^2[2],\Delta K[n-2],\iota_1 K[k],\iota_2K[n-k])\\
&\hspace{1.2ex}\vdots\\
&= V(K^2[n-1],\Delta K[1],\iota_1 K[k],\iota_2K[n-k])\\
&= V(K^2[n],\iota_1 K[k],\iota_2K[n-k]).
\end{split}
\end{align}

We will see that, if $1\leq k\leq n-1$ and $K$ is a polytope, the equalities \eqref{eq:equalities} yield certain incidence relations among the faces of $K$ that will eventually force the polytope to be a simplex. Note that the last equality in \eqref{eq:equalities} in fact always holds, and is therefore irrelevant for the characterization of equality cases of \eqref{eq:Godbersen}:

\begin{proposition}
\label{pro:n-1}
For any $K\in\calK(\RR^n)$ and $0\leq k \leq n$ one has
$$
V(K^2[n],\iota_1 K[k],\iota_2K[n-k])=V(K^2[n-1],\Delta K,\iota_1 K[k],\iota_2K[n-k]).
$$
\end{proposition}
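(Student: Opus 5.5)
The plan is to compute both sides explicitly using the decomposition $K^2=\iota_1K+\iota_2K$, multilinearity of the mixed volume, Lemma \ref{lem:Vpositive} to discard vanishing terms, and Lemma \ref{lem:exact} to evaluate the survivors. The left-hand side was already computed in the proof of Proposition \ref{pro:Godbersen}:
$$V(K^2[n],\iota_1K[k],\iota_2K[n-k])={n\choose k}{2n\choose n}^{-1}\vol_n(K)^2.$$
That computation does not use $\vol_n(K)>0$, since Lemma \ref{lem:exact} holds in general. So it suffices to show that the right-hand side equals the same quantity.

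\textbf{Expanding the right-hand side.} Substitute $K^2=\iota_1K+\iota_2K$ into the $n-1$ copies of $K^2$ and expand by multilinearity and symmetry:
$$V(K^2[n-1],\Delta K,\iota_1K[k],\iota_2K[n-k])=\sum_{j=0}^{n-1}{n-1\choose j}V(\Delta K,\iota_1K[k+j],\iota_2K[2n-k-1-j]).$$
Since $\dim\iota_iK\le n$, Lemma \ref{lem:Vpositive} (or the remark following it) kills every term with $k+j>n$ or $2n-k-1-j>n$. Hence only $j\in\{n-k-1,\,n-k\}$ survive, and the sum reduces to
$${n-1\choose n-k}V(\Delta K,\iota_1K[n],\iota_2K[n-1])+{n-1\choose n-k-1}V(\Delta K,\iota_1K[n-1],\iota_2K[n]).$$
When $k=0$ or $k=n$, one of these binomial coefficients is zero and that term simply drops.

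\textbf{Evaluating each surviving term.} Apply Lemma \ref{lem:exact} in $\RR^{2n}$ with $E=\iota_1(\RR^n)$, so $E^\perp=\iota_2(\RR^n)$. The orthogonal projection onto $E^\perp$ sends both $\Delta K$ and $\iota_2K$ to $\iota_2K$, which is isometric to $K$. This gives
$${2n\choose n}V(\iota_1K[n],\Delta K,\iota_2K[n-1])=\vol_n(K)\cdot\vol_n(K).$$
Symmetrically, taking $E=\iota_2(\RR^n)$ shows that the other term also equals ${2n\choose n}^{-1}\vol_n(K)^2$. Pascal's rule gives ${n-1\choose n-k}+{n-1\choose n-k-1}={n\choose n-k}={n\choose k}$, so the right-hand side equals ${n\choose k}{2n\choose n}^{-1}\vol_n(K)^2$, as required.

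There is no real obstacle here. The only points needing care are the index bookkeeping in the multilinear expansion and the identification of the projections of $\Delta K$ onto the coordinate factors.
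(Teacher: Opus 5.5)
Your proof is correct and follows the paper's own argument. Both expand the $n-1$ copies of $K^2=\iota_1K+\iota_2K$ and keep only the two surviving terms, with coefficients ${n-1\choose n-k}$ and ${n-1\choose k}$. Each term is evaluated as ${2n\choose n}^{-1}\vol_n(K)^2$ via Lemma \ref{lem:exact}, and Pascal's rule then recovers ${n\choose k}V(\iota_1K[n],\iota_2K[n])$, which is the left-hand side. Your write-up also makes explicit a step the paper leaves implicit, namely that $\Delta K$ projects onto $\iota_2K$ (resp.\ $\iota_1K$).
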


\begin{proof}
Using $K^2=\iota_1K+\iota_2K$ and  Lemma \ref{lem:exact}, we can write
\begin{align*}
V(K^2[n-1],\Delta K,\iota_1 K[k],\iota_2K[n-k])&={n-1\choose n-k}V(\Delta K,\iota_1 K[n],\iota_2K[n-1])\\
&\qquad+{n-1\choose k}V(\Delta K,\iota_1 K[n-1],\iota_2K[n])\\
&={n\choose k}{2n\choose n}^{-1}\vol_n(K)^2\\
&={n\choose k}V(\iota_1K[n],\iota_2K[n])\\
&=V(K^2[n],\iota_1K[k],\iota_2[n-k]).
\end{align*}
\end{proof}

As we will see, the other equalities in \eqref{eq:equalities} will, by contrast, impose strong restrictions on the geometry of $K$. In order to make these restrictions explicit, we will rely heavily on the integral formula \eqref{eq:MVint}. To this end, let us first make two simple observations concerning the support functions and mixed surface area measures of the relevant convex bodies.

The following notation will be used for the rest of this section: For any $K\in\calK(\RR^n)$  and $u=(u_1,u_2)\in\RR^{2n}$ we will denote
$$
F_0:=F_K(u_1+u_2),\quad F_1:=F_K(u_1),\quad\text{and}\quad F_2:=F_K(u_2).
$$

\begin{proposition}
\label{pro:hK}
Let $K\in\calK(\RR^n)$. For any $u=(u_1,u_2)\in\RR^{2n}$, one has $h_{K^2}(u)=h_{\Delta K}(u)$ if and only if $F_1\cap F_2\neq\emptyset$.
\end{proposition}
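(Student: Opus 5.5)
We compute both support functions explicitly and compare them. For $u=(u_1,u_2)\in\RR^{2n}$ and $K^2=K\times K$, the supremum decouples:
$$
h_{K^2}(u)=\sup_{x,y\in K}\big(\ip{x}{u_1}+\ip{y}{u_2}\big)=h_K(u_1)+h_K(u_2).
$$
For $\Delta K=\{(x,x)\mid x\in K\}$ we get
$$
h_{\Delta K}(u)=\sup_{x\in K}\ip{x}{u_1+u_2}=h_K(u_1+u_2).
$$
Since $\Delta K\subset K^2$, or equivalently by subadditivity of $h_K$, we always have $h_{\Delta K}(u)\le h_{K^2}(u)$. The statement therefore reduces to this: equality $h_K(u_1+u_2)=h_K(u_1)+h_K(u_2)$ in the subadditivity inequality holds exactly when $F_1\cap F_2\neq\emptyset$.

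For the ``if'' direction, pick $x\in F_1\cap F_2$. Then
$$
\ip{x}{u_1+u_2}=h_K(u_1)+h_K(u_2)\ge h_K(u_1+u_2)\ge \ip{x}{u_1+u_2},
$$
so equality holds throughout.

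For the ``only if'' direction, we use compactness of $K$ to choose a maximizer $x\in K$ with $\ip{x}{u_1+u_2}=h_K(u_1+u_2)$. By assumption this value equals $h_K(u_1)+h_K(u_2)$. Since $\ip{x}{u_1}\le h_K(u_1)$ and $\ip{x}{u_2}\le h_K(u_2)$, both inequalities must be equalities. Hence $x\in F_1\cap F_2$. As a byproduct, $x\in F_0$ as well, so in the equality case $F_1\cap F_2=F_0\cap F_1\cap F_2$ is nonempty.

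There is no real obstacle. The only points needing care are that $K$ is nonempty and compact, so the suprema are attained and the faces $F_K(u)$ are nonempty, and the degenerate case $u_i=0$. In that case $F_i=K$ and the statement holds trivially.
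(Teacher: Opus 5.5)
Your proof is correct and follows the same route as the paper, which simply cites $h_{K^2}(u)=h_K(u_1)+h_K(u_2)$, $h_{\Delta K}(u)=h_K(u_1+u_2)$, subadditivity of $h_K$, and the definition of a face. You have written out in full the two directions that the paper leaves implicit. Your separate treatment of $u_i=0$ is harmless but unnecessary, since the general argument already covers it.
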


\begin{proof}
This follows at once from the definition of a face, the sublinearity of support function $h_K$, and the fact that for any  $u=(u_1,u_2)\in\RR^{2n}$ one has
\begin{align}
\label{eq:hK2}
h_{K^2}(u)=h_K(u_1)+h_K(u_2)\qquad \text{and}\qquad h_{\Delta K}(u)=h_K(u_1+u_2).
\end{align}
\end{proof}

\begin{proposition}
\label{pro:supp}
Let $K\in\calK(\RR^n)$ be a polytope and let $a_0,a_1,a_2\geq0$ be integers satisfying $a_0+a_1+a_2=2n-1$. For any $u=(u_1,u_2)\in S^{2n-1}\subset\RR^{2n}$, one has
$$
u\in\supp S(\Delta K [a_0], \iota_1K[a_1], \iota_2K[a_2];\bullet)
$$ if and only if
\begin{align}
\label{eq:CharacterizationOfSupport}
\dim F_0\geq a_0,\quad \dim F_1\geq a_1,\quad \dim F_2\geq a_2,\quad\text{and}\quad \dim (\Delta F_0+\iota_1F_1+\iota_2F_2)=2n-1.
\end{align}
\end{proposition}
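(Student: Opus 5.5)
The plan is to read this off the general support formula \eqref{eq:supp}, applied in $\RR^{2n}$ to the $2n-1$ polytopes consisting of $a_0$ copies of $\Delta K$, $a_1$ copies of $\iota_1K$ and $a_2$ copies of $\iota_2K$. The first step is to compute the faces of these three bodies in direction $u=(u_1,u_2)$. Since $\ip{(x,x)}{u}=\ip{x}{u_1+u_2}$, we get $F_{\Delta K}(u)=\Delta F_0$. Since $\ip{(x,0)}{u}=\ip x{u_1}$ and $\ip{(0,x)}u=\ip x{u_2}$, we get $F_{\iota_1K}(u)=\iota_1F_1$ and $F_{\iota_2K}(u)=\iota_2F_2$. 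The maps $\Delta,\iota_1,\iota_2$ are injective and linear, so they preserve dimension.

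By \eqref{eq:supp}, $u$ lies in the support if and only if, for every multiset choice consisting of $b_0\le a_0$ copies of $\Delta F_0$, $b_1\le a_1$ copies of $\iota_1F_1$ and $b_2\le a_2$ copies of $\iota_2F_2$, we have
\[
\dim\big(b_0\Delta F_0+b_1\iota_1F_1+b_2\iota_2F_2\big)\ge b_0+b_1+b_2 .
\]
Here we use that the dimension of a Minkowski sum of convex sets depends only on which summands occur, not on their multiplicities. In other words, the dimension of the sum is $\dim$ of the sum of the linear spans of the direction spaces of the summands that appear.

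Necessity of \eqref{eq:CharacterizationOfSupport} is then immediate. Take $I$ to be the $a_j$ copies of a single body to get $\dim F_j\ge a_j$. Take all indices to get $\dim(\Delta F_0+\iota_1F_1+\iota_2F_2)\ge 2n-1$. Note that if some $a_j=0$, the corresponding summand is absent from the full sum. I would treat this by observing that in \eqref{eq:CharacterizationOfSupport} one may then include the face anyway, because the condition is still implied: see the sufficiency discussion below.

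Sufficiency is the main step. The sum lies in $u^\perp$, which has dimension $2n-1$, so the last condition says the full sum is $(2n-1)$-dimensional. Denote by $L_j$ the direction space of the $j$-th face, so $\dim L_j=d_j\ge a_j$. For a subfamily using the index set $J\subset\{0,1,2\}$ with total count $b=\sum_{j\in J}b_j\le\sum_{j\in J}a_j$, we need $\dim\sum_{j\in J}L_j\ge\sum_{j\in J}b_j$. For $|J|=1$ this follows from $d_j\ge a_j$. For $J=\{0,1,2\}$ it is the full-dimensionality. For $|J|=2$, say $J=\{1,2\}$, we have $\iota_1L_1\cap\iota_2L_2=0$, so the dimension is $d_1+d_2\ge a_1+a_2$.

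The delicate pairs are $J=\{0,j\}$. Here I would use
\[
\dim(L_0+L_1)\ge \dim(L_0+L_1+L_2)-d_2 = 2n-1-d_2 .
\]
This gives the bound only if $2n-1-d_2\ge a_0+a_1=2n-1-a_2$, which requires $d_2\le a_2$, not $d_2\ge a_2$. So this route fails, and I expect this to be the main obstacle.

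Instead I would exploit the structure: $\Delta L_0\cap\iota_1L_1=0$ always holds, since $(x,x)=(y,0)$ forces $x=0$. Hence
\[
\dim(\Delta L_0+\iota_1L_1)=d_0+d_1\ge a_0+a_1 ,
\]
and symmetrically for $\{0,2\}$. So all pairwise sums are direct. With that, every condition in \eqref{eq:supp} reduces to either the individual dimension bounds or the full-sum condition, which proves the equivalence. The case where some $a_j=0$ then only requires checking that the full-sum condition on the present summands matches the stated one. I would verify this using $d_j\le n-1$ and the resulting dimension count.
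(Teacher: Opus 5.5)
Your argument is correct and is essentially the paper's proof: you identify the faces as $\Delta F_0$, $\iota_1F_1$, $\iota_2F_2$, invoke \eqref{eq:supp}, and observe that each pairwise sum is direct, so the two-summand conditions follow from the individual bounds $\dim F_j\geq a_j$, while the three-summand condition is exactly the last condition in \eqref{eq:CharacterizationOfSupport}. One small correction: for the case where some $a_j=0$ you neither need nor can use a bound $d_j\le n-1$ (it fails when $u_j=0$, since then $F_j=K$); sufficiency follows from the directness of the remaining pairwise sum, and necessity follows because adding a summand cannot lower the dimension while the whole sum lies in a translate of $u^\perp$.
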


\begin{proof}
First, observe that
$$
F_{\Delta K}(u)=\Delta F_0,\quad F_{\iota_1 K}(u)=\iota_1 F_1,\quad\text{and}\quad F_{\iota_2 K}(u)=\iota_2 F_2.
$$
Second, if \eqref{eq:CharacterizationOfSupport} holds, then one also has $\dim(\Delta F_0+\iota_1 F_1)\geq a_0+a_1$ since  $\dim(\Delta F_0+\iota_1 F_1)=\dim F_0+\dim F_1$, and similarly $\dim(\Delta F_0+\iota_2 F_2)\geq a_0+a_2$ and $\dim(\iota_1 F_1+\iota_2 F_2)\geq a_1+a_2$. Now the claim follows from \eqref{eq:supp}.
\end{proof}

Combining these two observations with the integral representation of mixed volume \eqref{eq:MVint}, we obtain a first geometric criterion for equality in \eqref{eq:Godbersen} for polytopes.

\begin{lemma}\label{lem:supported_bad_direction_criterion}
Let  $1\leq k\leq n-1$ and let $K\in\calK(\RR^n)$ be a polytope. Then one has strict inequality in \eqref{eq:Godbersen} if and only if there exists $u=(u_1,u_2)\in\RR^{2n}$ such that 
\begin{align}
\label{eq:supported_bad_direction_criterion}
F_1 \cap F_2 = \emptyset, \quad  \dim F_1 \geq k, \quad \dim F_2 \geq n-k,\quad\text{and}\quad \dim (\Delta F_0+\iota_1F_1+\iota_2F_2)=2n-1.
\end{align}
\end{lemma}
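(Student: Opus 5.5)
Note first that the statement only makes sense for $\interior K\neq\emptyset$: if $\vol_n(K)=0$, both sides of \eqref{eq:Godbersen} vanish, so we assume $\interior K\neq\emptyset$ throughout.

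The plan is to use the chain \eqref{eq:equalities}. By Proposition \ref{pro:Godbersen}, strict inequality in \eqref{eq:Godbersen} holds if and only if
$$
V(\Delta K[n],\iota_1 K[k],\iota_2K[n-k])<V(K^2[n],\iota_1 K[k],\iota_2K[n-k]).
$$
Since the chain in \eqref{eq:equalities} is monotone, with $\Delta K\subset K^2$ at each step, this holds if and only if at least one step in the chain is strict. By Proposition \ref{pro:n-1} the last step is always an equality, so only the first $n-1$ steps matter. At step $j$ (with $0\le j\le n-2$) we compare
$$
V(K^2[j],\Delta K[n-j],\iota_1K[k],\iota_2K[n-k])\quad\text{and}\quad V(K^2[j+1],\Delta K[n-j-1],\iota_1K[k],\iota_2K[n-k]).
$$
These two differ only in one slot, which holds $\Delta K$ in the first and $K^2$ in the second. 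By \eqref{eq:MVint}, their difference is
$$
\frac1{2n}\int_{S^{2n-1}}\big(h_{K^2}-h_{\Delta K}\big)\,\d S(K^2[j],\Delta K[n-j-1],\iota_1K[k],\iota_2K[n-k];\cdot).
$$
The integrand is continuous and nonnegative. Hence the difference is positive if and only if there is some $u$ in the support of the measure with $h_{K^2}(u)>h_{\Delta K}(u)$. By Proposition \ref{pro:hK}, the last condition is exactly $F_1\cap F_2=\emptyset$.

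Next, rewrite each measure as a combination of the measures handled by Proposition \ref{pro:supp}. Expand $K^2=\iota_1K+\iota_2K$ in the measure by multilinearity. This gives a sum, with positive coefficients, of measures
$$
S(\Delta K[a_0],\iota_1K[a_1],\iota_2K[a_2];\cdot),\qquad a_0=n-j-1,\quad a_1=k+i,\quad a_2=n-k+j-i,\quad 0\le i\le j.
$$
The support of a positive sum is the union of the supports. So strict inequality holds if and only if some $u$ satisfies $F_1\cap F_2=\emptyset$ together with \eqref{eq:CharacterizationOfSupport} for one such triple $(a_0,a_1,a_2)$.

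For the "if" direction, suppose $u$ satisfies \eqref{eq:supported_bad_direction_criterion}. It suffices to exhibit one admissible triple for which $u$ lies in the support. Take $j=n-2$ and let $i$ range over $0,\dots,n-2$, so that $a_0=1$, $a_1$ ranges over $k,\dots,n+k-2$, and $a_1+a_2=2n-2$. Then $\dim F_0\ge 1$ holds because $\dim(\Delta F_0+\iota_1F_1+\iota_2F_2)=2n-1$ forces
$$
\dim F_0\ge 2n-1-\dim F_1-\dim F_2.
$$
It remains to choose $i$ so that $\dim F_1\ge a_1$ and $\dim F_2\ge a_2$. Setting $d_1=\dim F_1\ge k$ and $d_2=\dim F_2\ge n-k$, this means finding $a_1\in[k,\,n+k-2]$ with $a_1\le d_1$ and $2n-2-a_1\le d_2$, that is,
$$
\max(k,\,2n-2-d_2)\le a_1\le\min(d_1,\,n+k-2).
$$
Because $F_1\cap F_2=\emptyset$, neither face is all of $K$, so $d_1,d_2\le n-1$. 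The dimension condition gives $d_0+d_1+d_2\ge 2n-1$ with $d_0\le n-1$, hence $d_1+d_2\ge n$. I expect the key check to be $d_1+d_2\ge 2n-2$, which is what makes $2n-2-d_2\le d_1$, and this looks problematic. When it fails, I would instead take $a_0$ larger by choosing $j$ smaller. In general, choose $a_1\le d_1$ with $a_1\ge k$ and $a_2\le d_2$ with $a_2\ge n-k$, so that $a_0=2n-1-a_1-a_2\le n-1$. We then need $a_0\le d_0$, which holds once $a_1+a_2\ge 2n-1-d_0$; this is possible because $d_1+d_2\ge 2n-1-d_0$. We also need $a_0\ge 1$, so that $j=n-1-a_0\le n-2$, which holds since $d_1+d_2\le 2n-2$. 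Finally, $i=a_1-k$ must lie in $[0,j]$. This is the bookkeeping step I expect to be the main obstacle, and it requires a careful choice of $a_1,a_2$ within the intervals.

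For the "only if" direction, if $u$ is in some support with $F_1\cap F_2=\emptyset$, then \eqref{eq:CharacterizationOfSupport} gives directly $\dim F_1\ge a_1\ge k$, $\dim F_2\ge a_2\ge n-k$, and the full-dimensionality condition. Since $h$ is positively homogeneous, we may pass freely between $u\in\RR^{2n}\setminus\{0\}$ and unit vectors.
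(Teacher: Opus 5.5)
Your argument is the paper's proof: you use the chain \eqref{eq:equalities}, discard its last step by Proposition \ref{pro:n-1}, apply \eqref{eq:MVint} to the single changed slot, expand $K^2=\iota_1K+\iota_2K$ into the measures $S(\Delta K[a_0],\iota_1K[a_1],\iota_2K[a_2];\bullet)$, and finish with Propositions \ref{pro:hK} and \ref{pro:supp}. Your indices $j$ and $a_1=k+i$ correspond to the paper's $r$ and $j$.

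The only unfinished point is the bookkeeping you flag as the main obstacle, and it is automatic. Since $j=n-1-a_0=a_1+a_2-n$, the condition $0\le i=a_1-k\le j$ is equivalent to $a_1\ge k$ and $a_2\ge n-k$, which you have already imposed. So you can take $(a_0,a_1,a_2)=(2n-1-d_1-d_2,\,d_1,\,d_2)$ and check each requirement:
\begin{itemize}
\item $a_1\ge k$ and $a_2\ge n-k$ hold by hypothesis.
\item $1\le a_0\le n-1$ holds because $n\le d_1+d_2\le 2n-2$.
\item $a_0\le d_0$ holds because $d_0+d_1+d_2\ge 2n-1$.
\end{itemize}
This choice is slightly more economical than the paper's, which takes $a_0=d_0$ and $a_1=\max\{k,2n-1-d_0-d_2\}$. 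The paper's choice needs a separate argument for $d_0\ge1$ and a separate treatment of the case $d_0=n$.

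Your opening restriction to $\interior K\neq\emptyset$ is unnecessary, and the lemma does make sense without it. If $\vol_n(K)=0$, every term of the chain vanishes, so your equivalences go through verbatim. In that case both sides of the lemma are false.
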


\begin{proof}
Using Proposition \ref{pro:n-1}, the inequality in \eqref{eq:Godbersen} is strict if and only if
\begin{align}
\label{eq:VMstrict}
V(K^2[r+1], \Delta K[n-r-1],\iota_1K [k], \iota_2 K [n-k])>V(K^2[r], \Delta K[n-r],\iota_1K [k], \iota_2 K [n-k])
\end{align}
for some $0 \leq  r \leq n-2$. Using \eqref{eq:MVint} and $K^2 = \iota_1 K + \iota_2 K$, \eqref{eq:VMstrict} is equivalent to
$$
\sum_{i=0}^r \binom{r}{i}\eta_{r,k+i}>0,
$$
where we set
\[\eta_{r,j} :=\int_{S^{2n-1}} \big(h_{K^2}(u)-h_{\Delta K}(u)\big)\d S(\Delta K[n-r-1],\iota_1K[j],\iota_2K[n+r-j]; u).\]
Observe that $\eta_{r,j}\geq0$. Hence, the inequality in \eqref{eq:Godbersen} is strict if and only there exist $0\leq r\leq n-2$ and $k\leq j\leq k+r$ with $\eta_{r,j}>0$.

Assume that this is the case. Then there exists $u\in S^{2n-1}\subset\RR^{2n}$ satisfying
$$u
\in \supp S(\Delta K[n-r-1],\iota_1K[j],\iota_2K[n+r-j]; \bullet)
$$
and $h_{K^2}(u)>h_{\Delta K}(u)$. The former and Proposition \ref{pro:supp} imply
$$
\dim F_1\geq j\geq k,\quad \dim F_2\geq n+r-j\geq n-k,\quad\text{and}\quad \dim (\Delta F_0+\iota_1F_1+\iota_2F_2)=2n-1;
$$
the latter and Proposition \ref{pro:hK}  yield  $F_1\cap F_2=\emptyset$.

Conversely, assume there is $u\in\RR^{2n}$ satisfying \eqref{eq:supported_bad_direction_criterion}. Observe that $u\neq0$ since $u=0$ would imply $F_1=F_2=K$ and thus contradict $F_1\cap F_2=\emptyset$. By normalizing may therefore assume $u\in S^{2n-1}$. We want to show that $\eta_{r,j}>0$ for suitable $r$ and $j$. Since $F_1\cap F_2=\emptyset$ together with Proposition \ref{pro:hK} implies $h_{K^2}(u)>h_{\Delta K}(u)$, we only need to guarantee that
$$
u\in\supp  S(\Delta K[n-r-1],\iota_1K[j],\iota_2K[n+r-j]; \bullet).
$$
In view of Proposition \ref{pro:supp}, it thus suffices to find $0\leq r\leq n-2$ and $k\leq j\leq k+r$ with
\begin{align}
\label{eq:condition}
\dim F_0\geq n-r-1,\quad \dim F_1\geq j,\quad\text{and}\quad \dim F_2\geq n+r-j.
\end{align}
To this end, observe first that $ \dim F_0\geq 1$. Indeed, if $\dim F_0=0$, then $\dim(\iota_1F_1+\iota_2F_2)=2n-1$ and hence one of the faces $F_1$ and $F_2$ is $n$-dimensional which contradicts $F_1 \cap F_2 = \emptyset$. Now we will distinguish two cases. First, if $\dim F_0=n$, then \eqref{eq:condition} clearly holds for $r:=0$ and $j:=k$. Second, assume that $1\leq \dim F_0\leq n-1$. It is straightforward to verify that then
$$
r:=n-1-\dim F_0\quad\text{and}\quad j:=\max\{k,2n-1-\dim F_0-\dim F_2\}
$$
satisfy $\eqref{eq:condition}$ as well as $0\leq r\leq n-2$ and $k\leq j\leq k+r$. This finishes the proof.
\end{proof}

It turns out that the characterization of the equality cases in \eqref{eq:Godbersen} provided by Lemma \ref{lem:supported_bad_direction_criterion} can be further simplified. More precisely, the following lemma shows that the last condition in \eqref{eq:VMstrict} is in fact redundant.

\begin{lemma}\label{lem:maximal_partition_full_rank}
Let  $1\leq k\leq n-1$ and let $K\in\calK(\RR^n)$ be a polytope. Then one has strict inequality in \eqref{eq:Godbersen} if and only if there exists $u=(u_1,u_2)\in\RR^{2n}$ such that 
\begin{align}
\label{eq:supported_bad_direction_criterion_reduced}
F_1 \cap F_2 = \emptyset, \quad  \dim F_1 \geq k, \quad \text{and}\quad\dim F_2 \geq n-k.
\end{align}
\end{lemma}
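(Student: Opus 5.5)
The direction ``strict inequality $\Rightarrow$ \eqref{eq:supported_bad_direction_criterion_reduced}'' follows at once from Lemma \ref{lem:supported_bad_direction_criterion}, since \eqref{eq:supported_bad_direction_criterion} contains \eqref{eq:supported_bad_direction_criterion_reduced}. For the converse I would start from some $u$ satisfying \eqref{eq:supported_bad_direction_criterion_reduced} and modify it into a direction $u'$ that additionally satisfies $\dim(\Delta F_0+\iota_1F_1+\iota_2F_2)=2n-1$, keeping the other three conditions. Then Lemma \ref{lem:supported_bad_direction_criterion} finishes the proof.

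\emph{Extremal choice.} Among all pairs of faces $(G_1,G_2)$ of $K$ with $G_1\cap G_2=\emptyset$, $\dim G_1\geq k$ and $\dim G_2\geq n-k$, I will fix one that is maximal for inclusion, say with $\dim G_1+\dim G_2$ maximal. Such pairs exist by hypothesis, and there are finitely many faces. Next I take $u_1\in\relint N_K(G_1)$ and $u_2\in\relint N_K(G_2)$, so that $F_1=G_1$ and $F_2=G_2$. These choices can vary in the open set $C:=\relint N_K(G_1)\times\relint N_K(G_2)$. This set is open in $W:=(\lin G_1)^\perp\times(\lin G_2)^\perp$, where $\lin G$ denotes the linear space parallel to $\aff G$; here I use that for a polytope $\lin N_K(G)=(\lin G)^\perp$. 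Within $C$ I will choose $u$ so that $u_1+u_2$ is generic, meaning it lies in the relative interior of the normal cone of $F_0=F_K(u_1+u_2)$. This is possible because the normal fan is a finite partition into relatively open cones.

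\emph{Dimension count.} The affine hull of $\Delta F_0+\iota_1F_1+\iota_2F_2$ has codimension equal to the dimension of
$$
\Lambda:=\{w=(v_1,v_2)\in W\mid v_1+v_2\in(\lin F_0)^\perp\},
$$
and $u\in\Lambda$ always. So I must show $\Lambda=\RR u$. Suppose instead that some $w\in\Lambda$ is not parallel to $u$, and consider $u(t)=u+tw$. For small $|t|$ the point $u(t)$ stays in $C$, so $F_1,F_2$ do not change. Genericity, together with $v_1+v_2\in\lin N_K(F_0)$, ensures that $F_0$ does not change either. I would then increase $t$ until the first critical value $t_0$ at which $F_K(u_1+tv_1)$ or $F_K(u_2+tv_2)$ jumps to a strictly larger face. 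Such a $t_0$ must exist: otherwise $w$ would lie in the lineality of the configuration, and since $K$ is bounded this forces $w\in\RR u$. Replacing $w$ by $-w$ if necessary, at $t_0$ the new faces $F_1(t_0)\supseteq G_1$ and $F_2(t_0)\supseteq G_2$ still satisfy the dimension bounds, and at least one of them is strictly larger. By maximality they must therefore intersect, which by Proposition \ref{pro:hK} means $h_{K^2}(u(t_0))=h_{\Delta K}(u(t_0))$.

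\emph{Deriving the contradiction.} I would then use that $t\mapsto h_{K^2}(u(t))-h_{\Delta K}(u(t))$ is affine on $[0,t_0]$, because all three faces are constant on $[0,t_0)$, and that it is strictly positive at $t=0$. It vanishes at $t_0$, so I would run the same argument with $-w$ as well, aiming to obtain a zero on both sides of $0$. An affine function that is positive at $0$ cannot vanish on both sides of it, which gives the contradiction. I expect this step to be the main obstacle. Several points need care: that the jumps really occur in both directions $\pm w$; that $F_0$ stays fixed up to the first jump of $F_1$ or $F_2$, which may require re-choosing $u$ generically or letting $F_0$ jump as well and handling that case separately; and the degenerate directions where $v_1+v_2=0$, which leave $F_0$ and $h_{\Delta K}$ untouched and where only the growth of $F_1,F_2$ is available. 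If the affine-function argument does not close, my fallback is to pick the maximal pair and then choose $u$ inside $C$ to minimize $\dim\Lambda$, and to show directly, via the support description \eqref{eq:supp} as in Proposition \ref{pro:supp}, that a minimizer has $\dim\Lambda=1$.
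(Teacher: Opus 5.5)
The direction from strict inequality to \eqref{eq:supported_bad_direction_criterion_reduced} is fine. Your ``affine function positive at $0$ cannot vanish on both sides'' mechanism is essentially the contrapositive of the paper's argument. The converse, however, has a genuine gap exactly where you suspected. Your extremal choice controls only $(F_1,F_2)$, nothing controls $F_0$, and your proposed remedy (choosing $u_1+u_2$ generically) goes in the wrong direction.

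As literally stated, the genericity condition is vacuous, since every $x$ lies in $\relint N_K(F_K(x))$. Suppose you make it meaningful by requiring $F_0$ to be constant for all $u'\in C$ near $u$. Since $C$ is open in $W$, every $(v_1,v_2)\in W$ then satisfies $v_1+v_2\in\linspan N_K(F_0)$. Hence $\Lambda=W$ and $\dim\Lambda=2n-\dim G_1-\dim G_2\geq 2$. So the statement you are aiming for becomes false, not merely hard to prove.

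Concretely, take $K=[0,1]^2$, $k=1$, and the maximal pair $G_1=\{0\}\times[0,1]$, $G_2=\{1\}\times[0,1]$. Every $u\in C$ has the form $(ae_1,be_1)$ with $a<0<b$, and $\Delta F_0+\iota_1F_1+\iota_2F_2$ has codimension $2$ unless $a+b=0$. Only this non-generic choice, where $F_0=K$, works. The same example shows that both obstacles you listed are real. For $u=(-e_1,2e_1)$ and $w=(e_1,0)\in\Lambda$, moving from $u$ along $-w$ never changes $F_1$ or $F_2$. Only $F_0$ jumps (at $t=-1$), and $D$ stays positive, so there is no zero on that side. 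In particular, your claim that a jump of $F_1$ or $F_2$ must occur holds only for one of the two directions $\pm w$.

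The missing idea is to put $F_0$ into the extremal choice, and to accept that the boundary point you reach may be a jump of $F_0$ alone. The paper chooses $u$ satisfying \eqref{eq:supported_bad_direction_criterion_reduced} with $\dim F_0+\dim F_1+\dim F_2$ maximal, so $F_0$ is as large as possible rather than generic. It then works in the pointed polyhedral cone $\mathcal{C}=\{v: v_1\in N_K(F_1),\ v_2\in N_K(F_2),\ v_1+v_2\in N_K(F_0)\}$. This cone contains $u$ in its relative interior, and its linear span is your $\Lambda$. The paper cuts $\mathcal{C}$ with the hyperplane $\{\xi=\xi(u)\}$, where $\xi>0$ on $\mathcal{C}\setminus\{0\}$, to get a compact polytope $L$. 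It moves from $u$ along $\widehat w=w-\frac{\xi(w)}{\xi(u)}u$ instead of $w$, which guarantees reaching the relative boundary of $L$ on both sides. Since $D=h_{K^2}-h_{\Delta K}$ is linear on $\mathcal{C}$ and positive at $u$, it is positive at one of the two endpoints. There at least one of $F_0,F_1,F_2$ grows strictly, while $F_1\cap F_2=\emptyset$ and the dimension bounds persist. This contradicts maximality, including the case where only $F_0$ grows.

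Within your own setup the repair is lexicographic. Keep your maximal pair $(G_1,G_2)$, then choose $u\in C$ maximizing $\dim F_K(u_1+u_2)$. An endpoint where $F_1$ or $F_2$ grows contradicts maximality of the pair. An endpoint where only $F_0$ grows still lies in $C$ and contradicts maximality of $\dim F_0$. Your fallback of minimizing $\dim\Lambda$ points in this direction but, as stated, contains no argument.
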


\begin{proof}
Assume that there exists $u\in \RR^{2n}$ satisfying \eqref{eq:supported_bad_direction_criterion_reduced} and that it is chosen so that the number $\dim F_0+\dim F_1+\dim F_2$ is maximal possible. We will show that in such a case one has $\dim(\Delta  F_0 + \iota_1 F_1+ \iota_2 F_2)=2n-1$. The claim will then follow from Lemma \ref{lem:supported_bad_direction_criterion}.

We will need the following observation: Let $F$ be a face of $K$ and $x\in\RR^n$. If $x \in \relint N_K(F)$, then $F_K(x) = F $, while if $x \in N_K(F) \setminus \relint N_K(F)$, then $F \subsetneq F_K(x)$. Moreover, we always have $x\in \relint N_K(F_K(x))$. Observe also that $\linspan N_K(F)=\linspan(F-F)^\perp$. For $i=0,1,2$, we will denote 
$$
\b F_i:=\linspan(F_i-F_i)\quad\text{and}\quad  C_i:=N_K(F_i).
$$

As in the proof of Lemma \ref{lem:supported_bad_direction_criterion}, $F_1\cap F_2=\emptyset$ yields $u\neq 0$. Consider the cone
$$
\mathcal{C} := \{(v_1,v_2)\in\RR^{2n} : v_1 \in C_1, v_2 \in C_2, v_1+v_2 \in C_0 \}=N_{K^2}(\iota_1F_1+\iota_2F_2)\cap N_{\Delta K}(\Delta F_0).
$$
Then $u \in \relint\mathcal{C}$. Moreover, since $u\neq0$ lies in the relative interior of both cones $N_{\Delta K}(\Delta F_0)$ and $N_{K^2}(\iota_1F_1+\iota_2F_2)$, we have
\begin{align*}
\linspan \mathcal{C} &=\linspan N_{K^2}(\iota_1F_1+\iota_2F_2)\cap \linspan N_{\Delta K}(\Delta F_0)\\
&= \{(v_1,v_2)\in\RR^{2n}: v_1 \in \linspan C_1, v_2\in \linspan C_2, v_1+v_2\in \linspan C_0\},
\end{align*}

Let
$$
W:= \Delta \b F_0 + \iota_1\b F_1+ \iota_2\b F_2.
$$
We have $W\subset u^\perp$. Since $\dim(\Delta  F_0 + \iota_1 F_1+ \iota_2 F_2)=\dim W\leq \dim u^\perp=2n -1$, we need to show that $W=u^\perp$. Assume for contradiction that $W\subsetneq u^\perp$. Then there exists a non-zero vector $w=(w_1,w_2)\in W^\perp\cap u^\perp$. It necessarily satisfies
$$
w_1\in{\b F_1}^\perp=\linspan C_1,\quad w_2\in{\b F_2}^\perp=\linspan C_2,\quad\text{and}\quad w_1+w_2\in{\b F_0}^\perp=\linspan C_0.
$$
In particular, $w\in\linspan\calC$.

Because $C_i\cap(- C_i)=\{0\}$ for $i=0,1,2$, we have also $\calC\cap(-\calC)=\{0\}$ and therefore we can choose a linear functional $\xi\in(\RR^{2n})^*$ that is strictly positive on $\mathcal{C} \setminus\{0\}$. Define 
	$$H:=\{v\in\RR^{2n}\mid \xi(v) = \xi(u) \}\quad\text{and}\quad L := \mathcal{C} \cap H.$$
	We claim that $L$ is a (compact) polytope. Since $\calC$ is a closed polyhedral cone, it is only non-trivial to verify that $L$ is bounded. If $L$ was unbounded, there would exist a sequence $\{x_i\}\subset L$ with $x^i\to\infty$ for $i\to\infty$. By compactness and the choice of $\xi$, there exists
	$$
	m:=\min\{\xi(v):v\in \calC\cap S^{2n-1}\}>0.
	$$
	Observe also that $0\notin L$. Hence, we would get, for each $i\in\NN$,
	$$
	\xi(u)=\xi(x_i)=\Norm{x_i}\xi\left(\Norm{x_i}^{-1}x_i\right)\geq m\Norm{x_i}
	$$
	which contradicts $\Norm{x_i}\to\infty$.
	
	Denote
	\[\widehat w := w - \frac{\xi(w)}{\xi(u)}u.\]
	Then $\widehat w\in \linspan\calC$, $\xi(\widehat w) = 0$, and $\widehat w \neq 0$ since $w \in u^\perp$. Let
	$$
	\alpha:=\min\{t\in\RR\mid u + t \widehat w\in L\}\quad\text{and}\quad \beta:=\max\{t\in\RR\mid u + t \widehat w\in L\}.
	$$
Since $u\in\relint\calC$, we have $u\in\relint L$, and thus $\alpha<0<\beta$.

Consider the non-negative function
	$$
	D(v) = h_{K^2}(v)- h_{\Delta K}(v),\quad v\in\calC.
	$$
	Observe that $D$ is in fact the restriction of a linear functional. Indeed, pick $y_i \in F_0$, $i=0,1,2$. Then, using \eqref{eq:hK2} and the fact that $h_K(v_i)=\ip{y_i}{v_i}$ for all $v_i\in C_i$, $i=0,1,2$, we can write for any $v=(v_1,v_2)\in\calC$
	$$
	D(v)=\ip{y_1}{v_1}+\ip{y_2}{v_2}-\ip{y_0}{v_1+v_2}.
	$$
	If $D(u+\alpha\widehat w)=D(u+\beta\widehat w)=0$, then by linearity we would have $D(u)=0$ which would, according to Proposition \ref{pro:hK}, contradict the assumption $F_1 \cap F_2 = \emptyset$. Therefore, there exists a point $\tilde u=(\tilde u_1,\tilde u_2)\in\{u+\alpha\widehat w,u+\beta\widehat w\}$ on the relative boundary of $L$ such that $D(\tilde u)>0$. Since $\tilde u$ lies on the relative boundary of $\calC$, at least one of the vectors $\tilde u_1+\tilde u_2 \in C_0$, $\tilde u_1\in C_1$, and $ \quad \tilde u_2 \in C_2$ 	lies on the relative boundary of the corresponding normal cone. Consequently, at least one of the faces $F_K(\tilde u_1+\tilde u_2)$, $F_K(\tilde u_1)$, and $F_K(\tilde u_2)$ strictly contains $F_0$, $F_1$, or $F_2$ respectively. In particular, 
	$$
	\dim F_K(\tilde u_1+\tilde u_2)+\dim F_K(\tilde u_1)+\dim F_K(\tilde u_2)>\dim F_0+\dim F_1+\dim F_2.
	$$
	Observe also that $\dim F_K(\tilde u_1)\geq \dim F_1\geq k$ and  $\dim F_K(\tilde u_2)\geq \dim F_2\geq n-k$. Finally, since $D(\tilde u) > 0$, we have $F_K(\tilde u_1) \cap F_K(\tilde u_2) = \emptyset$ by Proposition \ref{pro:hK}, and we thus get a contradiction with the choice of $u$. We conclude that $W=u^\perp$ and hence $\dim(\Delta  F_0 + \iota_1 F_1+ \iota_2 F_2)=2n-1$, as desired.
\end{proof}

We have thus reduced the proof of the second part of Theorem \ref{thm:mainA} to the characterization of simplices due to Develin \cite{develin}, see Lemma \ref{lem:develin}.

\begin{proof}[Proof of Theorem \ref{thm:mainA}]
The inequality \eqref{eq:Godbersen} was proven in Proposition \ref{pro:Godbersen}. To prove the characterization of equality cases, assume that $1\leq k\leq n-1$ and $K\in\calK(\RR^n)$ is a polytope with $\interior K\neq \emptyset$. By Lemma \ref{lem:maximal_partition_full_rank}, the inequality  \eqref{eq:Godbersen} is strict if and only if there exist disjoint faces $F,G\subseteq K$ with $\dim F=k$ and $\dim G=n-k$. Indeed, observe that if \eqref{eq:supported_bad_direction_criterion_reduced} holds, then there exist faces $F\subseteq F_1\subseteq K$ and $G\subseteq F_2\subseteq K$, obviously still disjoint, with $\dim F=k$ and $\dim G=n-k$. Equivalently, equality holds in \eqref{eq:Godbersen} if and only if each $k$-dimensional face and each $(n-k)$-dimensional face of $K$ intersect. By Lemma \ref{lem:develin}, this is equivalent to $K$ being a simplex.

\end{proof}

\section{$L_p$-Rogers--Shephard inequality}
\label{s:Lp}

In the final section we will use Godbersen's inequality (Proposition \ref{pro:Godbersen}) to prove an $L_p$-version of the Rogers--Shephard inequality and  establish the characterization of its equality cases, thus proving Theorem \ref{thm:mainB}. Moreover, as a byproduct of our proof, we will obtain a sharpening of the inequality for symmetric convex bodies.

Let $p\in(1,\infty]$ and let $K\in\calK(\RR^n)$ be a convex body with $0\in K$. The following notation will be used throughout this section: First, we abbreviate
$$
D_pK:=K +_p (-K) .
$$
Second, for $t\in[0,1]$ we define
$$
C_t:=(1-t)^{\frac1q}K+t^{\frac1q}(-K),
$$
where $\frac1p + \frac1q = 1$. Then, according to \eqref{eq:+Lp},
$$
D_pK = \bigcup_{0 \leq t \leq 1} C_t.
$$
Finally, for $z\in D_pK$, we set
$$
I_z := \{t \in [0,1]: z \in C_t\}.
$$

\begin{lemma}
\label{lem:concave}
Let $p\in(1,\infty]$ and $K\in\calK(\RR^n)$ with $0\in K$. For any $z\in D_pK$, $I_z\subset[0,1]$ is a closed interval, and the function $z\mapsto |I_z|$ is concave on $D_pK$.
\end{lemma}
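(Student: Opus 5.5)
The plan is to reparametrize so that the family $C_t$ becomes the image of a single convex set under a linear map, after which both claims follow from basic convexity. Set $s=(1-t)^{1/q}$ and $r=t^{1/q}$. Then $z\in C_t$ means $z=sx-ry$ with $x,y\in K$. Since $0\in K$, this is equivalent to the existence of $a\in sK$ and $b\in rK$ with $z=a-b$. Consider the set
$$
\Omega:=\{(z,t)\in\RR^n\times[0,1] : z=a-b,\ a\in (1-t)^{\frac1q}K,\ b\in t^{\frac1q}K\}.
$$
Then $I_z$ is the fibre of $\Omega$ over $z$. Both statements follow once we show that $\Omega$ is closed and convex. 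Indeed, fibres of a closed convex set are closed intervals, nonempty for $z\in D_pK$. Moreover, for a compact convex $\Omega$ with fibres $I_z=[\alpha(z),\beta(z)]$, the function $\beta$ is concave and $\alpha$ is convex, so $|I_z|=\beta(z)-\alpha(z)$ is concave on the projection $D_pK$.

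For convexity, write $\Omega$ as the image under the linear map $(a,b,t)\mapsto(a-b,t)$ of the set
$$
\widetilde\Omega:=\{(a,b,t): t\in[0,1],\ g_K(a)\le (1-t)^{1/q},\ g_K(b)\le t^{1/q}\}.
$$
Here I use the gauge $g_K$, taking the convention $g_K=+\infty$ off $\bigcup_{\lambda}\lambda K$, which is still convex since $0\in K$. The set $\{(a,\lambda):g_K(a)\le\lambda\}$ is convex because $g_K$ is convex. The functions $t\mapsto(1-t)^{1/q}$ and $t\mapsto t^{1/q}$ are concave on $[0,1]$ because $1/q\le1$. The conditions defining $\widetilde\Omega$ have the form convex function $\le$ concave function, so each defines a convex set. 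Concretely, if $(a_i,b_i,t_i)\in\widetilde\Omega$ and $\lambda\in[0,1]$, then
$$
g_K(\lambda a_1+(1-\lambda)a_2)\le\lambda(1-t_1)^{1/q}+(1-\lambda)(1-t_2)^{1/q}\le(1-\lambda t_1-(1-\lambda)t_2)^{1/q},
$$
and the same computation works for $b$. Hence $\widetilde\Omega$ is convex, and therefore so is its linear image $\Omega$.

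For closedness, $\widetilde\Omega$ is bounded because $a$ and $b$ lie in $K$. It is also closed: $(1-t)^{1/q}K$ depends continuously on $t$ in the Hausdorff metric, so the set $\{(a,t): a\in(1-t)^{1/q}K\}$ is closed, and similarly for $b$. Thus $\widetilde\Omega$ is compact, and its continuous image $\Omega$ is compact. The case $p=\infty$, where $q=1$, needs no separate treatment: $C_t=(1-t)K+t(-K)$, and the argument above applies verbatim.

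The only point needing care is the equivalence between the gauge description and membership in $sK$. This is where $0\in K$ enters, through $sK\subset s'K$ for $s\le s'$; the endpoint cases $s=0$ or $r=0$ are handled because $0\cdot K=\{0\}$. Once $\Omega$ is known to be convex, the concavity of $|I_z|$ is immediate.
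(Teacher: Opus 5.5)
Your proposal is correct and follows the paper's argument. Both show that $\Sigma=\{(z,t): z\in C_t\}$ is a compact convex set, using the concavity of $t\mapsto t^{1/q}$ together with $0\in K$, and then read off that $I_z$ is a section of $\Sigma$ whose endpoints are convex and concave in $z$. Your lift to $(a,b,t)$ through the gauge inequality $g_K(a)\le(1-t)^{1/q}$ repackages the paper's explicit coefficients $a_1,a_2,b_1,b_2$ with $a_1+a_2,\,b_1+b_2\le 1$; note that the equivalence $g_K(a)\le s\Leftrightarrow a\in sK$ in the boundary case $g_K(a)=s$ also uses that $K$ is closed.
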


\begin{proof}
We first prove that the set
$$
\Sigma:=\{(z,t)\in\RR^n\times[0,1]\mid z\in C_t\}
$$
is convex. Let $(z_1,t_1),(z_2,t_2)\in\Sigma$, then there exist $x_1,x_2,y_1,y_2\in K$ with
$$
z_1=(1-t_1)^{\frac 1q}x_1+t_1^{\frac 1q}y_1\quad\text{and}\quad z_2=(1-t_2)^{\frac 1q}x_2+t_2^{\frac 1q}y_2.
$$
Take any $\lambda\in[0,1]$ and set
$$
t:=\lambda t_1+(1-\lambda)t_2,\quad z:=\lambda z_1+(1-\lambda)z_2, 
$$
and
$$
a_1:=\lambda (1-t_1)^{\frac 1q}(1-t)^{-\frac 1q},\quad a_2:=(1-\lambda) (1-t_2)^{\frac 1q}(1-t)^{-\frac 1q},\quad b_1:=\lambda t_1^{\frac 1q}t^{-\frac 1q},\quad b_2:=(1-\lambda) t_2^{\frac 1q}t^{-\frac 1q}.
$$
Since the function $t\mapsto t^{\frac 1q}$ is concave for $q\geq1$, we have
$$
a_1,a_2,a_1+a_2,b_1,b_2,b_1+b_2\in[0,1].
$$
Since $0\in K$, this implies $a_1x_1+a_2x_2,b_1y_1+b_2y_2\in K$. Consequently,
$$
z=(1-t)^{\frac 1q}(a_1x_1+a_2x_2)-t^{\frac1q}(b_1y_1+b_2y_2)\in C_t,
$$
proving that $(z,t)\in\Sigma$. Since $\Sigma$ is also clearly compact, we have $\Sigma\in\calK(\RR^{n+1})$.

Now $I_z$ is a (1-dimensional) section of $\Sigma$, hence a closed interval.

For $z\in D_p(K)$, denote $I_z=[a(z),b(z)]$, where $0\leq a(z)\leq b(z)\leq 1$. Let $z_1,z_2\in D_pK$ and $\lambda\in[0,1]$. Then by convexity of $\Sigma$ we have
$$
\big(\lambda z_1+(1-\lambda)z_2,\lambda a(z_1)+(1-\lambda)a(z_2)\big),
\big(\lambda z_1+(1-\lambda)z_2,\lambda b(z_1)+(1-\lambda)b(z_2)\big)\in\Sigma.
$$
Consequently, 
$$
[\lambda a(z_1)+(1-\lambda)a(z_2),\lambda b(z_1)+(1-\lambda)b(z_2)]\subset I_{\lambda z_1+(1-\lambda)z_2}
$$
and hence
$$
|I_{\lambda z_1+(1-\lambda)z_2}|\geq \lambda\big(b(z_1)-a(z_1)\big)+(1-\lambda)\big(b(z_2)-a(z_2)\big)= \lambda|I_{z_1}|+(1-\lambda)|I_{z_2}|,
$$
proving concavity.
\end{proof}

\begin{lemma}
\label{lem:volLp}
Let $p\in(1,\infty]$. For each $K\in\calK(\RR^n)$ with $0\in K$ one has
\begin{align}
\label{eq:volLp}
\vol_n(D_pK)\leq\left(1 + \frac nq\right) \int_0^1 \vol_n(C_t) \d t.
\end{align}
\end{lemma}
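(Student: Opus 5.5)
We will prove \eqref{eq:volLp} with Fubini's theorem applied to the set $\Sigma$ from the proof of Lemma~\ref{lem:concave}, combined with the concavity of $z\mapsto|I_z|$. Computing the $(n+1)$-dimensional volume of $\Sigma$ in two ways gives
\begin{align*}
\int_0^1\vol_n(C_t)\d t=\vol_{n+1}(\Sigma)=\int_{D_pK}|I_z|\d z .
\end{align*}
Hence it suffices to prove
\begin{align*}
\int_{D_pK}|I_z|\d z\geq \frac{1}{1+\frac nq}\vol_n(D_pK)=\frac{q}{n+q}\vol_n(D_pK).
\end{align*}

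The standard tool for such a bound is the inequality for concave functions on a convex body. If $f\geq0$ is concave on a convex body $D$ and $f(z_0)\geq m$ at a point $z_0\in D$, then $f(z_0+s(z-z_0))\geq(1-s)m$ for $z\in D$. This gives $\int_D f\geq \frac{m}{n+1}\vol_n(D)$ via the cone bound. That constant, $\frac1{n+1}$, is too weak when $q>1$. So we must use the specific structure of $|I_z|$ near the boundary of $D_pK$, and not only concavity.

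The natural reference point is $z_0=0$. Since $0\in K$, we have $0\in C_t$ for every $t$, so $I_0=[0,1]$ and $|I_0|=1$. Instead of the linear lower bound along rays, we would like
\[|I_{sz}|\geq 1-s^q\quad\text{for } z\in D_pK,\ s\in[0,1].\]
Integrating this in polar coordinates gives
\begin{align*}
\int_{D_pK}|I_z|\d z\geq \vol_n(D_pK)\, n\int_0^1(1-s^q)s^{n-1}\d s=\vol_n(D_pK)\Bigl(1-\frac{n}{n+q}\Bigr)=\frac{q}{n+q}\vol_n(D_pK),
\end{align*}
which is exactly the required bound.

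The main step is therefore to prove $|I_{sz}|\geq1-s^q$. Take $z\in C_{t_0}$, so $z=(1-t_0)^{1/q}x-t_0^{1/q}y$ with $x,y\in K$. For $sz$ to lie in $C_t$, we need $s(1-t_0)^{1/q}\leq(1-t)^{1/q}$ and $s t_0^{1/q}\leq t^{1/q}$. In that case
\[sz=(1-t)^{1/q}\bigl(a x\bigr)-t^{1/q}\bigl(b y\bigr),\qquad a,b\in[0,1],\]
and $ax,by\in K$ because $0\in K$. These two conditions say $t\geq s^q t_0$ and $1-t\geq s^q(1-t_0)$. So $I_{sz}\supseteq[s^qt_0,\,1-s^q(1-t_0)]$, an interval of length $1-s^q$.

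This is the crux of the argument and is completely elementary. Concavity from Lemma~\ref{lem:concave} is not even needed here; only the fact that $I_z$ is an interval is used, to make sense of $|I_z|$. For $p=\infty$, i.e.\ $q=1$, the same computation works with exponent $1$. Finally, the polar-coordinate integration is justified because $D_pK$ is star-shaped with respect to $0$, which holds since $0\in D_pK$ and $D_pK$ is convex.
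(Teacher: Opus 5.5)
Your proposal is correct and follows essentially the paper's own argument. It uses the same Fubini identity $\int_0^1\vol_n(C_t)\d t=\int_{D_pK}|I_z|\d z$ and the same rescaling step, which gives $|I_z|\geq 1-g(z)^q$ for the gauge $g$ of $D_pK$ (your $|I_{sz}|\geq 1-s^q$). The only cosmetic difference is that you integrate $1-g^q$ in polar coordinates, whereas the paper uses a layer-cake computation with the level sets $tD_pK$.
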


\begin{proof}
Fix $z\in D_pK$ and set $\tau:=g(z)$, where $g:=g_{D_pK}$ is the gauge function of $D_pK$. Assume that $z\neq 0$. Then $\tau\in(0,1]$ and $z\in\tau D_pK$. Hence, there exist $t_0 \in [0,1]$ and $x,y \in K$ such that 
$$
z = \tau (1-t_0)^{\frac1q}x - \tau t_0^{\frac1q} y.
$$
Choose any $t\in [\tau^qt_0,1 - \tau^q(1-t_0)]$. Then for 
$$
\alpha:=\tau(1-t_0)^{\frac1q}(1-t)^{-\frac1q}\quad \text{and} \quad\beta:= \tau t_0^{\frac1q}t^{-\frac1q}
$$
we have $\alpha,\beta\in[0,1]$. Consequently, using that $0\in K$, we have $\alpha x,\beta y \in K$ and hence
$$
z=(1-t)^{\frac1q}\alpha x - t^{\frac1q} \beta y\in C_t.
$$
Therefore, $[\tau^qt_0,1 - \tau^q(1-t_0)]\subset I_z$ which implies
\begin{align}
\label{eq:Iz}
|I_z| \geq 1 - g(z)^q.
\end{align}
Observe that since $I_0=[0,1]$ and $g(0)=0$, \eqref{eq:Iz} in fact holds for all $z\in D_pK$.

Integration using Fubini's theorem gives
\begin{align}
\label{eq:intgeq}
\int_0^1 \vol_n(C_t) \d t=\int_0^1\int_{\RR^n}  \mathbbm1_{C_t}(z) \d z\d t=\int_{D_pK}|I_z|\d z\geq \int_{D_pK}\big(1-g(z)^q\big)\d z.
\end{align}
Further, since for $t\in[0,1]$ one has $tD_pK=\{z \in \RR^n \mid g(z) \leq t\}\subset D_pK$, we compute
	\begin{align*}
	\int_{D_p K}g(z)^q \d z &= \int_{D_p K} \int_0^1 q t^{q-1}\mathbbm{1}_{\{g(z) > t\}} \d t \d z\\
	& = \int_0^1 q t^{q-1} \int_{D_pK} \mathbbm{1}_{\{g(z) > t\}} \d z \d t\\
	& = \int_0^1 q t^{q-1} \int_{D_pK} \left(1-\mathbbm{1}_{\{g(z) \leq t\}}\right) \d z \d t\\
	&= \vol_n(D_pK)\int_0^1 q t^{q-1}(1-t^n) \d t  \\
	&= \frac{n}{n+ q}\vol_n(D_p K).
	\end{align*}
	Plugging this into \eqref{eq:intgeq}, \eqref{eq:volLp} follows.
\end{proof}

\begin{proof}[Proof of Theorem \ref{thm:mainB}]
First, using Lemma \ref{lem:volLp},  Proposition \ref{pro:Godbersen}, and the formula
\begin{align}
\label{eq:integral}
\left(1 + \frac nq\right)\int_{0}^1(1-t)^{\frac kq}t^{\frac{n-k}q}\d t = \frac{\Gamma\left(1 + \frac kq\right)\Gamma\left(1 + \frac{n-k}{q}\right)}{\Gamma\left(1 + \frac{n}{q}\right)} = \binom{n/q}{k/q}^{-1},
\end{align}
we compute
\begin{align*}
\vol_n(D_pK)&\leq\left(1 + \frac nq\right) \int_0^1 \vol_n(C_t) \d t\\
&=\left(1 + \frac nq\right) \int_0^1 \left[\sum_{k=0}^n {n\choose k} (1-t)^{\frac k q}t^{\frac{n-k}q}V(K[k],-K[n-k]) \right]\d t\\
&\leq\left(1 + \frac nq\right)\vol_n(K) \sum_{k=0}^n {n\choose k}^2 \int_0^1 (1-t)^{\frac k q}t^{\frac{n-k}q} \d t\\
&=\sum_{k=0}^n\binom{n}{k}^2\binom{n/q}{k/q}^{-1}\vol_n(K),
\end{align*}
proving \eqref{eq:LpRS}.

Second, it is well known and not difficult to verify that equality in \eqref{eq:LpRS} holds if $K$ is a simplex with a vertex at the origin, see, e.g., \cite[Lemma 23]{ManuiNdiayeZvavitch24} and \cite[Theorem 1]{Fradelizi:Equality}.

Finally, assume that equality is attained  in \eqref{eq:LpRS} for $K\in\calK(\RR^n)$. Then, first, equality holds in \eqref{eq:Godbersen} for all $0\leq k\leq n$. In particular, for $k=1$, this implies that $K$ is a simplex. Second, by convexity of the gauge function and by Lemma \ref{lem:concave}, the function $z\mapsto|I_z|-1+g(z)^q$ is continuous on $D_pK$. By \eqref{eq:Iz}, for any $z\in D_pK$,
\begin{align}
\label{eq:equality}
|I_z|=1-g(z)^q .
\end{align}
Suppose  $0\in K$ is not an extreme point. Then there exists $u\in \RR^n\setminus\{0\}$ such that $u,-u\in K$. Take any $t\in[0,1]$ and set $s:=(1-t)^{\frac 1q}+t^{\frac 1q}$. Since $s\geq 1$ and $0\in K$, we have $s^{-1}u\in K\cap(-K)$ and hence
$$
u=(1-t)^{\frac1q}s^{-1}u+t^{\frac1q}s^{-1}u\in C_t.
$$
Since $t\in[0,1]$ was arbitrary, we have $|I_u|=1$. Since $u\neq 0$, we have $g(u)>0$, and thus we get
$$
|I_u|>1-g(u)^q,
$$
contradicting \eqref{eq:equality}. Therefore $0$ is an extreme point of the simplex $K$, hence a vertex.
\end{proof}

As pointed out to us by A. Manui, an immediate consequence of our proof of Theorem \ref{thm:mainB}, more precisely of Lemma \ref{lem:volLp}, is the following sharpening of the inequality \eqref{eq:LpRS} for symmetric convex bodies:

\begin{proposition}
\label{pro:LpRSsym}
Let $p\in(1,\infty]$ and $q\in[1,\infty)$ satisfy $\frac1p + \frac1q =1$. Let $K\in\calK(\RR^n)$ with $0\in K$. If $K$ has a center of symmetry, one has
\begin{align}
\label{eq:LpRSsym}
\vol_n(D_pK)\leq \sum_{k=0}^n\binom{n}{k}\binom{n/q}{k/q}^{-1}\vol_n(K).
\end{align}
\end{proposition}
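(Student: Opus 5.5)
The argument follows the proof of Theorem \ref{thm:mainB} verbatim up to the point where Godbersen's inequality was invoked. The only change is that the bound $V(K[k],-K[n-k])\leq {n\choose k}\vol_n(K)$ is replaced by the much cheaper symmetric bound. So first I apply Lemma \ref{lem:volLp}. It only requires $0\in K$, which is part of the hypothesis, and it gives
$$
\vol_n(D_pK)\leq\left(1+\frac nq\right)\int_0^1\vol_n(C_t)\,\mathrm dt .
$$
Then I expand $\vol_n(C_t)$ by multilinearity of the mixed volume, exactly as in the proof of Theorem \ref{thm:mainB}:
$$
\vol_n(C_t)=\sum_{k=0}^n{n\choose k}(1-t)^{\frac kq}t^{\frac{n-k}q}V(K[k],-K[n-k]).
$$

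The key step is to show $V(K[k],-K[n-k])=\vol_n(K)$ for every $k$ when $K$ has a center of symmetry $c$. In that case $-K=K-2c$ is a translate of $K$. Mixed volumes are translation invariant in each argument; this follows from the polynomial expansion, since $\vol_n$ of a Minkowski combination is unchanged when the summands are translated. Hence
$$
V(K[k],-K[n-k])=V(K[k],K[n-k])=\vol_n(K).
$$
Note that the center $c$ need not be the origin: the hypothesis $0\in K$ is used only in Lemma \ref{lem:volLp}, and the translation invariance handles any $c$.

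Substituting this identity and using the Beta integral \eqref{eq:integral} term by term, I get
$$
\vol_n(D_pK)\leq\vol_n(K)\sum_{k=0}^n{n\choose k}\left(1+\frac nq\right)\int_0^1(1-t)^{\frac kq}t^{\frac{n-k}q}\,\mathrm dt=\sum_{k=0}^n{n\choose k}\binom{n/q}{k/q}^{-1}\vol_n(K),
$$
which is \eqref{eq:LpRSsym}. There is no real obstacle here. The only point needing care is the translation-invariance step, which is what lets a body with an arbitrary center of symmetry be treated, and I would state it explicitly.
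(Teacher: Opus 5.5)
Your proposal is correct and is essentially the paper's argument. The paper writes $-K=K+x$ and observes directly that $C_t=\bigl((1-t)^{1/q}+t^{1/q}\bigr)K+t^{1/q}x$, so $\vol_n(C_t)=\bigl((1-t)^{1/q}+t^{1/q}\bigr)^n\vol_n(K)$; this is your mixed-volume expansion with $V(K[k],-K[n-k])=\vol_n(K)$ obtained from translation invariance, after which both arguments conclude via Lemma~\ref{lem:volLp} and \eqref{eq:integral}.
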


\begin{proof}
Assume that $-K=K+x$ for some $x\in\RR^n$. Then for $t\in[0,1]$ we have
$$
C_t=\left((1-t)^{\frac 1q}+t^{\frac1q}\right)K+t^{\frac1q} x.
$$
Hence
$$
\vol_n(C_t)=\left((1-t)^{\frac 1q}+t^{\frac1q}\right)^n\vol_n(K)=\sum_{k=0}^n{n\choose k}(1-t)^{\frac kq}t^{\frac{n-k}q}\vol_n(K)
$$
and \eqref{eq:LpRSsym} follows at once from Lemma \ref{lem:volLp} and \eqref{eq:integral}.
\end{proof}

Proposition \ref{pro:LpRSsym} confirms \cite[Conjecture 4]{Fradelizi:LpRS}. Note that the authors of \cite{Fradelizi:LpRS} showed that the inequality is sharp, as equality in \eqref{eq:LpRSsym} is attained for the cube $K=[0,1]^n$ (and thus for all parallelotopes with a vertex at the origin), see \cite[Theorem 7]{Fradelizi:LpRS}.

\bibliographystyle{abbrv}
\bibliography{ref}

\begin{bibdiv}
\begin{biblist}

\bib{AG19}{article}{
      author={Alonso-Guti\'errez, David},
      author={Artstein-Avidan, Shiri},
      author={Gonz\'alez~Merino, Bernardo},
      author={Jim\'enez, Carlos~Hugo},
      author={Villa, Rafael},
       title={Rogers-{S}hephard and local {L}oomis-{W}hitney type
  inequalities},
        date={2019},
        ISSN={0025-5831,1432-1807},
     journal={Math. Ann.},
      volume={374},
      number={3-4},
       pages={1719\ndash 1771},
         url={https://doi.org/10.1007/s00208-019-01834-3},
      review={\MR{3985122}},
}

\bib{AG16}{article}{
      author={Alonso-Guti\'errez, David},
      author={Gonz\'alez~Merino, Bernardo},
      author={Jim\'enez, C.~Hugo},
      author={Villa, Rafael},
       title={Rogers-{S}hephard inequality for log-concave functions},
        date={2016},
        ISSN={0022-1236,1096-0783},
     journal={J. Funct. Anal.},
      volume={271},
      number={11},
       pages={3269\ndash 3299},
         url={https://doi.org/10.1016/j.jfa.2016.09.005},
      review={\MR{3554706}},
}

\bib{AG21}{article}{
      author={Alonso-Guti\'errez, David},
      author={Hern\'andez~Cifre, Mar\'ia~A.},
      author={Roysdon, Michael},
      author={Yepes~Nicol\'as, Jes\'us},
      author={Zvavitch, Artem},
       title={On {R}ogers-{S}hephard type inequalities for general measures},
        date={2021},
        ISSN={1073-7928,1687-0247},
     journal={Int. Math. Res. Not. IMRN},
      number={10},
       pages={7224\ndash 7261},
         url={https://doi.org/10.1093/imrn/rnz010},
      review={\MR{4259147}},
}

\bib{ArtsteinETAL15}{article}{
      author={Artstein-Avidan, S.},
      author={Einhorn, K.},
      author={Florentin, D.~I.},
      author={Ostrover, Y.},
       title={On {G}odbersen's conjecture},
        date={2015},
        ISSN={0046-5755,1572-9168},
     journal={Geom. Dedicata},
      volume={178},
       pages={337\ndash 350},
         url={https://doi.org/10.1007/s10711-015-0060-1},
      review={\MR{3397498}},
}

\bib{ArtsteinPutterman25}{article}{
      author={Artstein-Avidan, S.},
      author={Putterman, E.},
       title={On unbalanced difference bodies and {G}odbersen's conjecture},
        date={2025},
        ISSN={0002-9939,1088-6826},
     journal={Proc. Amer. Math. Soc.},
      volume={153},
      number={12},
       pages={5345\ndash 5359},
         url={https://doi.org/10.1090/proc/17428},
      review={\MR{4989647}},
}

\bib{ArtsteinSadovskySanyal23}{article}{
      author={Artstein-Avidan, S.},
      author={Sadovsky, S.},
      author={Sanyal, R.},
       title={Geometric inequalities for anti-blocking bodies},
        date={2023},
        ISSN={0219-1997,1793-6683},
     journal={Commun. Contemp. Math.},
      volume={25},
      number={3},
       pages={Paper No. 2150113, 30},
         url={https://doi.org/10.1142/S0219199721501133},
      review={\MR{4568892}},
}

\bib{BianchiniColesanti}{article}{
      author={Bianchini, C.},
      author={Colesanti, A.},
       title={A sharp {R}ogers and {S}hephard inequality for the
  {$p$}-difference body of planar convex bodies},
        date={2008},
        ISSN={0002-9939,1088-6826},
     journal={Proc. Amer. Math. Soc.},
      volume={136},
      number={7},
       pages={2575\ndash 2582},
         url={https://doi.org/10.1090/S0002-9939-08-09209-5},
      review={\MR{2390529}},
}

\bib{boroczky96}{article}{
      author={B\"or\"oczky, K., Jr.},
       title={Around the {R}ogers-{S}hepard inequality},
        date={1996},
        ISSN={0865-2090},
     journal={Math. Pannon.},
      volume={7},
      number={1},
       pages={113\ndash 130},
      review={\MR{1378125}},
}

\bib{boroczky05}{article}{
      author={B\"or\"oczky, K., Jr.},
       title={The stability of the {R}ogers-{S}hephard inequality and of some
  related inequalities},
        date={2005},
        ISSN={0001-8708,1090-2082},
     journal={Adv. Math.},
      volume={190},
      number={1},
       pages={47\ndash 76},
         url={https://doi.org/10.1016/j.aim.2003.11.015},
      review={\MR{2104905}},
}

\bib{BuragoZalgaller}{book}{
      author={Burago, Yu.\~D.},
      author={Zalgaller, V.~A.},
       title={Geometric inequalities},
      series={Grundlehren der mathematischen Wissenschaften [Fundamental
  Principles of Mathematical Sciences]},
   publisher={Springer-Verlag, Berlin},
        date={1988},
      volume={285},
        ISBN={3-540-13615-0},
         url={https://doi.org/10.1007/978-3-662-07441-1},
        note={Translated from the Russian by A. B. Sosinski\u i, Springer
  Series in Soviet Mathematics},
      review={\MR{936419}},
}

\bib{Chakerian}{article}{
      author={Chakerian, G.~D.},
       title={Inequalities for the difference body of a convex body},
        date={1967},
        ISSN={0002-9939,1088-6826},
     journal={Proc. Amer. Math. Soc.},
      volume={18},
       pages={879\ndash 884},
         url={https://doi.org/10.2307/2035131},
      review={\MR{218972}},
}

\bib{Colesanti06}{article}{
      author={Colesanti, Andrea},
       title={Functional inequalities related to the {R}ogers-{S}hephard
  inequality},
        date={2006},
        ISSN={0025-5793},
     journal={Mathematika},
      volume={53},
      number={1},
       pages={81\ndash 101},
         url={https://doi.org/10.1112/S0025579300000048},
      review={\MR{2304054}},
}

\bib{develin}{article}{
      author={Develin, Mike},
       title={Disjoint faces of complementary dimension},
        date={2004},
        ISSN={0138-4821},
     journal={Beitr\"age Algebra Geom.},
      volume={45},
      number={2},
       pages={463\ndash 464},
      review={\MR{2093178}},
}

\bib{FaryRedei}{article}{
      author={F\'ary, I.},
      author={R\'edei, L.},
       title={Der zentralsymmetrische {K}ern und die zentralsymmetrische
  {H}\"ulle von konvexen {K}\"orpern},
        date={1950},
        ISSN={0025-5831,1432-1807},
     journal={Math. Ann.},
      volume={122},
       pages={205\ndash 220},
         url={https://doi.org/10.1007/BF01342966},
      review={\MR{39294}},
}

\bib{firey62}{article}{
      author={Firey, Wm.~J.},
       title={{$p$}-means of convex bodies},
        date={1962},
        ISSN={0025-5521,1903-1807},
     journal={Math. Scand.},
      volume={10},
       pages={17\ndash 24},
         url={https://doi.org/10.7146/math.scand.a-10510},
      review={\MR{141003}},
}

\bib{Fradelizi:Equality}{article}{
      author={Fradelizi, M.},
      author={Manui, A.},
      author={Meyer, M.},
      author={Ndiaye, C.~S.},
       title={Equality cases for the {$L_p$-Rogers--Shephard} inequality in the
  plane and for locally anti-blocking bodies in {$\mathbb R^n$}},
        date={2026},
     journal={preprint},
      eprint={arXiv:2606.07887},
}

\bib{Fradelizi:LpRS}{article}{
      author={Fradelizi, M.},
      author={Manui, A.},
      author={Meyer, M.},
      author={Ndiaye, C.~S.},
       title={{$L_p$}-{R}ogers--{S}hephard type inequalities for
  {$L_p$}-zonoids and symmetric bodies},
        date={2026},
     journal={preprint},
      eprint={arXiv:2607.03582},
}

\bib{FK26}{article}{
      author={Fryš, F.},
      author={Kotrbat\'y, J.},
       title={Around higher-order godbersen conjectures},
        date={2026},
     journal={preprint},
      eprint={arXiv:2609.08612},
}

\bib{Godbersen}{book}{
      author={Godbersen, C.},
       title={Der {S}atz vom {V}ektorbereich in {R}äumen beliebiger
  {D}imensionen},
   publisher={Ph.D. Thesis, Göttingen},
        date={1938},
}

\bib{Grunbaum63}{incollection}{
      author={Gr\"unbaum, B.},
       title={Measures of symmetry for convex sets},
        date={1963},
   booktitle={Proc. {S}ympos. {P}ure {M}ath., {V}ol. {VII}},
   publisher={Amer. Math. Soc., Providence, RI},
       pages={233\ndash 270},
      review={\MR{156259}},
}

\bib{Grunbaum2003}{book}{
      author={Gr\"unbaum, Branko},
       title={Convex polytopes},
     edition={Second},
      series={Graduate Texts in Mathematics},
   publisher={Springer-Verlag, New York},
        date={2003},
      volume={221},
        ISBN={0-387-00424-6; 0-387-40409-0},
         url={https://doi.org/10.1007/978-1-4613-0019-9},
        note={Prepared and with a preface by Volker Kaibel, Victor Klee and
  G\"unter M.\ Ziegler},
      review={\MR{1976856}},
}

\bib{HaddadETAL23}{article}{
      author={Haddad, J.},
      author={Langharst, D.},
      author={Putterman, E.},
      author={Roysdon, M.},
      author={Ye, D.},
       title={Affine isoperimetric inequalities for higher-order projection and
  centroid bodies},
        date={2025},
        ISSN={0025-5831,1432-1807},
     journal={Math. Ann.},
      volume={393},
      number={1},
       pages={1073\ndash 1121},
         url={https://doi.org/10.1007/s00208-025-03271-x},
      review={\MR{4966578}},
}

\bib{HaddadETAL25}{article}{
      author={Haddad, J.},
      author={Langharst, D.},
      author={Putterman, E.},
      author={Roysdon, M.},
      author={Ye, D.},
       title={Higher-order {$L^p$} isoperimetric and {S}obolev inequalities},
        date={2025},
        ISSN={0022-1236,1096-0783},
     journal={J. Funct. Anal.},
      volume={288},
      number={2},
       pages={Paper No. 110722, 45},
         url={https://doi.org/10.1016/j.jfa.2024.110722},
      review={\MR{4815903}},
}

\bib{HaddadETAL25+}{article}{
      author={Haddad, Juli\'an},
      author={Langharst, Dylan},
      author={Livshyts, Galyna~V.},
      author={Putterman, Eli},
       title={On the polar of {S}chneider's difference body},
        date={2025},
     journal={preprint},
      eprint={arXiv:2503.06191},
}

\bib{K25}{article}{
      author={Kotrbat\'y, J.},
       title={On a generalization of {G}odbersen's conjecture},
        date={2025},
        ISSN={1073-7928,1687-0247},
     journal={Int. Math. Res. Not. IMRN},
      number={22},
       pages={Paper No. rnaf341, 11},
         url={https://doi.org/10.1093/imrn/rnaf341},
      review={\MR{4988320}},
}

\bib{KW22}{article}{
      author={Kotrbat\'y, Jan},
      author={Wannerer, Thomas},
       title={On mixed {H}odge-{R}iemann relations for translation-invariant
  valuations and {A}leksandrov-{F}enchel inequalities},
        date={2022},
        ISSN={0219-1997,1793-6683},
     journal={Commun. Contemp. Math.},
      volume={24},
      number={7},
       pages={Paper No. 2150049, 24},
         url={https://doi.org/10.1142/S0219199721500498},
      review={\MR{4476308}},
}

\bib{Langharst:Comments}{article}{
      author={Langharst, D.},
       title={Some comments on the mth-order projection bodies},
        date={2026},
     journal={J. Convex Anal.},
       pages={to appear},
}

\bib{LangharstETAL:mOrder}{article}{
      author={Langharst, D.},
      author={Putterman, E.},
      author={Roysdon, M.},
      author={Ye, D.},
       title={On the {$m$}th-order weighted projection body operator and
  related inequalities},
        date={2025},
        ISSN={2189-3756,2189-3764},
     journal={Pure Appl. Funct. Anal.},
      volume={10},
      number={5},
       pages={1323\ndash 1353},
      review={\MR{5000527}},
}

\bib{LangharstRoysdonZhao25}{article}{
      author={Langharst, D.},
      author={Roysdon, M.},
      author={Zhao, Y.},
       title={On the {$m$}th-order affine {P}\'olya-{S}zeg\"o{} principle},
        date={2025},
        ISSN={1050-6926,1559-002X},
     journal={J. Geom. Anal.},
      volume={35},
      number={7},
       pages={Paper No. 205, 32},
         url={https://doi.org/10.1007/s12220-025-02039-8},
      review={\MR{4913078}},
}

\bib{LangharstSolaUlivelli24}{article}{
      author={Langharst, D.},
      author={Sola, F.~M.},
      author={Ulivelli, J.},
       title={Higher-order reverse isoperimetric inequalities for log-concave
  functions},
        date={2024},
     journal={preprint},
      eprint={arXiv:2403.05712},
}

\bib{LangharstXi24}{article}{
      author={Langharst, D.},
      author={Xi, D.},
       title={General higher order {$L^p$} mean zonoids},
        date={2024},
        ISSN={0002-9939,1088-6826},
     journal={Proc. Amer. Math. Soc.},
      volume={152},
      number={12},
       pages={5299\ndash 5311},
         url={https://doi.org/10.1090/proc/16914},
      review={\MR{4856394}},
}

\bib{LutwakI}{article}{
      author={Lutwak, E.},
       title={The {B}runn-{M}inkowski-{F}irey theory. {I}. {M}ixed volumes and
  the {M}inkowski problem},
        date={1993},
        ISSN={0022-040X,1945-743X},
     journal={J. Differential Geom.},
      volume={38},
      number={1},
       pages={131\ndash 150},
         url={http://projecteuclid.org/euclid.jdg/1214454097},
      review={\MR{1231704}},
}

\bib{LutwakII}{article}{
      author={Lutwak, E.},
       title={The {B}runn-{M}inkowski-{F}irey theory. {II}. {A}ffine and
  geominimal surface areas},
        date={1996},
        ISSN={0001-8708,1090-2082},
     journal={Adv. Math.},
      volume={118},
      number={2},
       pages={244\ndash 294},
         url={https://doi.org/10.1006/aima.1996.0022},
      review={\MR{1378681}},
}

\bib{LYZ12}{article}{
      author={Lutwak, E.},
      author={Yang, D.},
      author={Zhang, G.},
       title={The {B}runn-{M}inkowski-{F}irey inequality for nonconvex sets},
        date={2012},
        ISSN={0196-8858,1090-2074},
     journal={Adv. in Appl. Math.},
      volume={48},
      number={2},
       pages={407\ndash 413},
         url={https://doi.org/10.1016/j.aam.2011.11.003},
      review={\MR{2873885}},
}

\bib{Makai}{article}{
      author={{Makai Jr.}, E.},
       title={Research problems},
        date={1974},
        ISSN={0031-5303,1588-2829},
     journal={Period. Math. Hungar.},
      volume={5},
      number={4},
       pages={353\ndash 354},
         url={https://doi.org/10.1007/BF02018191},
      review={\MR{1553587}},
}

\bib{ManuiNdiayeZvavitch24}{article}{
      author={Manui, A.},
      author={Ndiaye, C.~S.},
      author={Zvavitch, A.},
       title={On the volume of sums of anti-blocking bodies},
        date={2026},
     journal={Commun. Contemp. Math.},
       pages={to appear},
}

\bib{RogersShephard57}{article}{
      author={Rogers, C.~A.},
      author={Shephard, G.~C.},
       title={The difference body of a convex body},
        date={1957},
        ISSN={0003-889X,1420-8938},
     journal={Arch. Math. (Basel)},
      volume={8},
       pages={220\ndash 233},
         url={https://doi.org/10.1007/BF01899997},
      review={\MR{92172}},
}

\bib{RogersShephard58}{article}{
      author={Rogers, C.~A.},
      author={Shephard, G.~C.},
       title={Convex bodies associated with a given convex body},
        date={1958},
        ISSN={0024-6107,1469-7750},
     journal={J. London Math. Soc.},
      volume={33},
       pages={270\ndash 281},
         url={https://doi.org/10.1112/jlms/s1-33.3.270},
      review={\MR{101508}},
}

\bib{Roysdon20}{article}{
      author={Roysdon, M.},
       title={Rogers-{S}hephard type inequalities for sections},
        date={2020},
        ISSN={0022-247X,1096-0813},
     journal={J. Math. Anal. Appl.},
      volume={487},
      number={1},
       pages={123958, 22},
         url={https://doi.org/10.1016/j.jmaa.2020.123958},
      review={\MR{4070397}},
}

\bib{Sadovsky25}{article}{
      author={Sadovsky, S.},
       title={Godbersen's conjecture for locally anti-blocking bodies},
        date={2025},
        ISSN={1615-715X,1615-7168},
     journal={Adv. Geom.},
      volume={25},
      number={3},
       pages={307\ndash 315},
         url={https://doi.org/10.1515/advgeom-2025-0019},
      review={\MR{4933586}},
}

\bib{Schneider70}{article}{
      author={Schneider, R.},
       title={Eine {V}erallgemeinerung des {D}ifferenzenk\"orpers},
        date={1970},
        ISSN={0026-9255,1436-5081},
     journal={Monatsh. Math.},
      volume={74},
       pages={258\ndash 272},
         url={https://doi.org/10.1007/BF01303445},
      review={\MR{262926}},
}

\bib{Schneider85}{incollection}{
      author={Schneider, R.},
       title={On the {A}leksandrov-{F}enchel inequality},
        date={1985},
   booktitle={Discrete geometry and convexity ({N}ew {Y}ork, 1982)},
      series={Ann. New York Acad. Sci.},
      volume={440},
   publisher={New York Acad. Sci., New York},
       pages={132\ndash 141},
         url={https://doi.org/10.1111/j.1749-6632.1985.tb14547.x},
      review={\MR{809200}},
}

\bib{Schneider00}{incollection}{
      author={Schneider, R.},
       title={Mixed functionals of convex bodies},
        date={2000},
      volume={24},
       pages={527\ndash 538},
         url={https://doi.org/10.1007/s004540010054},
        note={The Branko Gr\"unbaum birthday issue},
      review={\MR{1758069}},
}

\bib{Schneider09}{article}{
      author={Schneider, R.},
       title={Stability for some extremal properties of the simplex},
        date={2009},
        ISSN={0047-2468,1420-8997},
     journal={J. Geom.},
      volume={96},
      number={1-2},
       pages={135\ndash 148},
         url={https://doi.org/10.1007/s00022-010-0028-0},
      review={\MR{2659606}},
}

\bib{Schneider2014}{book}{
      author={Schneider, R.},
       title={Convex bodies: the {B}runn-{M}inkowski theory},
     edition={expanded},
      series={Encyclopedia of Mathematics and its Applications},
   publisher={Cambridge University Press, Cambridge},
        date={2014},
      volume={151},
        ISBN={978-1-107-60101-7},
      review={\MR{3155183}},
}

\bib{vanHandelWang}{article}{
      author={van Handel, R.},
      author={Wang, S.},
       title={On {M}inkowski's monotonicity problem},
        date={2025},
     journal={preprint},
      eprint={arXiv:2507.20082},
}

\bib{ZhouETAL25}{article}{
      author={Zhou, X.},
      author={Ye, D.},
      author={Zhang, Z.},
       title={The {$m$}th order {O}rlicz projection bodies},
        date={2025},
        ISSN={1050-6926,1559-002X},
     journal={J. Geom. Anal.},
      volume={35},
      number={9},
       pages={Paper No. 259, 35},
         url={https://doi.org/10.1007/s12220-025-02082-5},
      review={\MR{4929667}},
}

\bib{Ziegler1995}{book}{
      author={Ziegler, G\"unter~M.},
       title={Lectures on polytopes},
      series={Graduate Texts in Mathematics},
   publisher={Springer-Verlag, New York},
        date={1995},
      volume={152},
        ISBN={0-387-94365-X},
         url={https://doi.org/10.1007/978-1-4613-8431-1},
      review={\MR{1311028}},
}

\end{biblist}
\end{bibdiv}

\end{document}